\DeclareMathOperator{\cb}{cb}
\DeclareMathOperator{\CB}{\mathcal{CB}}
\DeclareMathOperator{\proj}{proj}
\newcommand{\n}[1]{ \left\|#1\right\| }
\newcommand{\N}{{\mathbb{N}}}
\newtheorem{theorem}{Theorem}[section]
\newtheorem{lemma}[theorem]{Lemma}
\newtheorem{definition}[theorem]{Definition}
\newtheorem{corollary}[theorem]{Corollary}
\newtheorem{proposition}[theorem]{Proposition}
\newtheorem{remark}[theorem]{Remark}
\newtheorem{question}[theorem]{Question}
\newtheorem{problem}[theorem]{Problem}
\def\ker{{\rm ker\, }}
\begin{document}

\title[Operator $p$-compact mappings]{Operator $p$-compact mappings}
\keywords{Operator spaces, mapping ideals, $p$-compact mappings, $p$-nuclear mappings}
	\subjclass[2010]{46L07 (primary), 47B10, 46B28 (secondary)}

\author[J.A. Ch\'avez-Dom\'inguez]{Javier Alejandro Ch\'avez-Dom\'inguez}
\address{Department of Mathematics, University of Oklahoma, Norman, OK 73019-3103,
USA} \email{jachavezd@ou.edu}

\author{Ver\'onica Dimant}
\address{Departamento de Matem\'{a}tica y Ciencias, Universidad de San
Andr\'{e}s, Vito Dumas 284, (B1644BID) Victoria, Buenos Aires,
Argentina and CONICET} \email{vero@udesa.edu.ar}

\author{Daniel Galicer}
\address{Departamento de Matem\'{a}tica, Facultad de Ciencias Exactas y Naturales, Universidad de Buenos Aires, (1428) Buenos Aires,
Argentina and CONICET} \email{dgalicer@dm.uba.ar}

\thanks{
The first author was partially supported by NSF grant DMS-1400588.
The second author was partially supported by CONICET PIP 11220130100483 and ANPCyT PICT 2015-2299.  The third author was partially supported by CONICET PIP 11220130100329 and ANPCyT PICT 2015-3085.}

 \begin{abstract}
We introduce the class of operator $p$-compact mappings and completely right $p$-nuclear operators, which are natural extensions to the operator space framework of their corresponding Banach operator ideals. We relate these two classes, define  natural operator space structures and study several properties of these ideals.
We show that the class of operator $\infty$-compact mappings in fact coincides with a notion already introduced by Webster in the nineties (in a very different language). This allows us to provide an operator space structure to Webster's class.

 \end{abstract}

 \maketitle

\section*{Introduction}

Alexander Grothendieck, one of the most influential mathematicians of the 20th century, 
took his first steps in research in the area of functional analysis. In his early work, he developed a systematic theory of tensor products and norms \cite{grothendieck1956resume}, where he set the basis of what was later known as local theory --- that is, the study of Banach spaces in terms of their finite-dimensional subspaces --- and exhibited the importance of the use of tensor
products in the theory of Banach (and locally convex) spaces.
Grothendieck's approach, although at first it received little notice given its complexity and the language in which it was written, was rediscovered at the end of the sixties by Lindenstrauss and Pe{\l}czy{\'n}ski \cite{lindenstrauss1968absolutely} and inspired huge developments in Banach space theory.
In particular, it inspired the creation of a whole new theory: the study of operator ideals (systematized by Pietsch and the German school \cite{pietsch1968ideale,pietsch1978operator}).
In order to develop the metric theory of tensor products, Grothendieck gave a characterization of compactness in Banach spaces of independent interest \cite[Chap. I, p. 112]{grothendieck1955produits}: relatively compact sets are precisely those that lie within the absolutely convex hull of a null sequence. That is,
a set $K$ in a Banach space $\mathbf X$ is relatively compact if and only if there is a sequence $(x_n)_{n \in \mathbb{N}}$ in $\mathbf X$ such that

\begin{equation}\label{groth definicion de compacto}
K\subseteq \left\{\sum_{n=1}^{\infty} \alpha_n x_n \colon \sum_{n =1}^\infty \vert \alpha_n \vert \leq 1\right\} \mbox{ and } \lim_{n\to \infty} \Vert x_n \Vert = 0.
\end{equation}

Inspired by Grothendieck's result, Sinha and Karn \cite{sinha2002compact} introduced the notion of relatively $p$-compact
sets.
This definition describes relatively $p$-compact sets as those which are determined in a way similar to \eqref{groth definicion de compacto}, but by $p$-summable sequences instead of null ones.
That is, if $1 \leq p < \infty$ and $\frac{1}{p} + \frac{1}{p'} = 1$, a subset $K \subset \textbf X$ is called \emph{relatively $p$-compact} if there exists a sequence $(x_n)_n$ in  $\mathbf X$   such that
\begin{equation}\label{p-compacto}
K\subseteq \left\{\sum_{n=1}^{\infty} \alpha_n x_n \colon \sum_{n =1}^\infty \vert \alpha_n \vert^{p'} \leq 1\right\} \mbox{ and } \sum_{n=1}^{\infty} \Vert x_n \Vert^p < \infty.
\end{equation}
Of course, in the limiting case $p = 1$ we
replace the definition above by
$K\subseteq \left\{\sum_{n=1}^{\infty} \alpha_n x_n \colon \vert \alpha_n \vert \leq 1\right\}$  and $\sum_{n=1}^{\infty} \Vert x_n \Vert < \infty.$
Thus, classical compact sets can then be seen as ``$\infty$-compact''. Moreover, the following monotonicity relation holds: if $1 \leq q \leq p \leq \infty$, any relatively $q$-compact set is in fact relatively $p$-compact.
Therefore, $p$-compactness reveals ``finer and subtle'' structures on compact sets.

Having this definition at hand, it is natural to think about extending the notion of compactness to linear mappings.
By simple analogy, $p$-compact mappings arise as those which map the unit ball into a relatively $p$-compact set.
This definition, which comes from Sinha and Karn's original paper \cite{sinha2002compact}, turns out to yield a normed operator ideal, denoted by $\mathcal K_p$ (see the precise definition in Section \ref{section completely compact mappings}).
In recent years there has been great interest in this class --- and in some notions that naturally relate to it --- from a variety of perspectives including:
the theory of operator ideals \cite{prasad2008compact,delgado2010operators,delgado2010density,ain2012compact,pietsch2014ideal,turco2016mathcal,fourie2018injective}, the theory of tensor norms \cite{galicer2011ideal}, the interplay with various approximation properties \cite{delgado2009p,choi2010dual,lassalle2012p,lassalle2013banach,lassalle2016weaker,lassalle2017null,oja2012remark,oja2012grothendieck}, structural properties of sets and sequences \cite{pineiro2011p,ain2015p,kim2014unconditionally,fourie2017weak}, infinite dimensional complex analysis \cite{aron2010p,aron2011p,aron2012ideals,munoz2015alpha,aron2016behavior}.

The main goal of the present paper is to initiate the study of a noncommutative version of $p$-compact mappings, specifically in the context of operator spaces.
Recall that an operator space is a Banach space $\textbf X$ with an extra ``matricial norm structure'':
in addition to the norm on $\textbf X$, we have norms on all the spaces $M_n(\textbf X)$ of $n \times n$ matrices with entries from $\textbf X$ (and these norms must satisfy certain consistency requirements).
The morphisms are no longer just bounded maps, but the \emph{completely bounded ones}: they are required to be uniformly bounded on all the matricial levels.
Operator spaces are thus a quantized or noncommutative version of Banach spaces, giving rise to a theory that not only is mathematically attractive but it is also naturally well-positioned to have applications to quantum physics.
The systematic study of operator spaces started with Ruan's thesis, and
was developed mainly by Effros and Ruan, Blecher and Paulsen, Pisier and Junge (see the monographs \cite{junge1999factorization,Pisier-Asterisque-98,Blecher-LeMerdy-book,Effros-Ruan-book,Pisier-Operator-Space-Theory} and the references therein).

Due to their very definition, it is natural to investigate to what extent the classical theory of Banach spaces can be translated to the noncommutative context of operator spaces.
Though some properties do carry over, many do not and these differences are one of the reasons making the new theory so interesting.
In particular, both ideals of operators and tensor norms have inspired important developments in the operator space setting.
Noncommutative versions of nuclear, integral, summing, and other ideals of operators have played significant roles in, e.g.,
\cite{Effros-Ruan-book, Pisier-Asterisque-98,Junge-Habilitationschrift,Effros-Junge-Ruan,Junge-Parcet-Maurey-factorization},
whereas in addition to the usefulness of various specific tensor norms for operator spaces (most notably the Haagerup one),
Blecher and Paulsen \cite{Blecher-Paulsen-Tensors} have showed that the elementary theory of tensor norms of Banach spaces carries over to operator spaces, initiating a ``tensor norm program'' for operator spaces further developed in \cite{Blecher-1992}.
However, the abstract tool of associating ideals of operators and tensor norms in the sense of \cite[Chap. 17]{Defant-Floret} does not appear to have been developed yet.
In the present paper we take some small steps in this direction, which we need
throughout our definition and study of the new class of \emph{operator $p$-compact mappings} (which we denote by $\mathcal K_p^{o}$).
A much more thorough study of the relationships between tensor norms and ideals of mappings in the operator space category will appear in \cite{CDDG}.

One way to define this class is to rely on certain factorizations that characterize $p$-compact mappings via right $p$-nuclear maps. To this end, we study and define the  ideal of \emph{completely right $p$-nuclear mappings}, $\mathcal{N}^{p}_{o}$ (which are, in some way, a transposed version of the class given in \cite[Definition 3.1.3.1.]{Junge-Habilitationschrift} called $p$-nuclear operators).
In particular, $\mathcal K_p^{o}$ is the surjective hull of $\mathcal{N}^p_{o}$, as shown by Delgado, Pi\~neiro and Serrano \cite[Prop. 3.11]{delgado2010operators} and by Pietsch \cite[Thm. 1]{pietsch2014ideal} in the Banach space context.
This relation among these two classes enables us to give an appropriate operator space structure to $\mathcal K_p^{o}$.

At the end of the nineties, Webster (a student of Effros) proposed in his doctoral thesis  \cite{webster1997local} several ways to extend the notion of compactness to the operator space framework (see also \cite{Webster1998}).
One of these notions, which he called \emph{operator compactness}, is in effect a noncommutative version of Grothendieck's characterization  (being in the absolute convex hull of a null sequence as in Equation~\eqref{groth definicion de compacto}).
This also gave raise to the notion of \emph{operator compact mappings}.
Therefore, another path to translate the concept of $p$-compactness to the category of operator spaces is based on Webster's ideas.
We show that these two possible ways to define $p$-compactness in the operator space setting (the one which involves the factorization through completely right $p$-nuclear mappings and the one based on Webster's work) in fact coincide.
This allows us to endow Webster's class (of operator compact
mappings) with an operator space structure.

The  article is organized as follows.
In Section~\ref{Preliminaries} we present the notation and some basic concepts in the theory of operator spaces.
In Section~\ref{section right $p$-nuclear} we introduce the notion of completely right $p$-nuclear mappings, first in tensor terms and then proving a characterization in terms of factorizations. It should be mentioned that for $p=1$, this concept coincides with the completely nuclear mappings already studied in \cite[Section 12.2]{Effros-Ruan-book}.
In Section~\ref{section completely compact mappings} we introduce a notion of operator $p$-compact mappings based on factorization schemes, and give an operator space structure to this class by relating it to the aforementioned right $p$-nuclear mappings.
Additionally, we characterize operator $p$-compact mappings in terms of a noncommutative notion of $p$-compact sets.
We end the article with some open questions.

\section{Preliminaries} \label{Preliminaries}

Our Banach space notation is quite standard, and follows closely that of \cite{Defant-Floret,Ryan}. The injective tensor product of Banach spaces will be denoted by $\otimes_\varepsilon$.
For a Banach space $\mathbf{X}$ and $1\le p < \infty$, we denote by $\ell_p(\mathbf{X})$ and $\ell_{p}^w(\textbf X)$ the spaces of $p$-summable and weakly $p$-summable sequences in $\mathbf{X}$, respectively, with their usual norms
$$
  \Vert (x_n) \Vert_{\ell_{p}(\textbf X)} = \bigg( \sum_{n=1}^\infty \n{x_n}^p \bigg)^{1/p} , \qquad
 \Vert (x_n) \Vert_{\ell_{p}^w(\textbf X)} = \sup_{\n{x'} \le 1} \bigg( \sum_{n=1}^\infty  \big|x'(x_n)\big|^p \bigg)^{1/p}
$$
where $x'$ is taken in $\mathbf{X}'$, the dual of $\mathbf{X}$. The obvious modifications are adapted to the case $p=\infty$.
The unit ball of $\mathbf{X}$ is denoted by $B_{\mathbf{X}}$.

We only assume familiarity with the basic theory of operator spaces; the books \cite{Pisier-Operator-Space-Theory} and \cite{Effros-Ruan-book} are excellent references. Our notation follows closely that from \cite{Pisier-Asterisque-98, Pisier-Operator-Space-Theory}, with one main exception: we denote the dual of a space $E$ by $E'$.

Throughout the article $E$ and $F$ will be operator spaces. For each $n$, $M_n(E)$ will denote the space of $n\times n$ matrices of elements of $E$.
We will just write $M_n$ when $E$ is the scalar field.
The spaces $M_{n \times m}(E)$ and $M_{n\times m}$ are defined analogously.
For a linear mapping $T:E\to F$, we will denote its $n$-amplification by $T_n:M_n(E)\to M_n(F)$. The space of completely bounded linear mappings from $E$ into $F$ will be noted by $\CB (E,F)$.
The corresponding bilinear notion is the following: a bilinear map $T : E_1 \times E_2 \to F$ is jointly completely bounded if there exists a constant $C \ge 0$ such that for any matrices $(x_{i,j})_{i,j =1}^n \in M_n(E_1)$ and $(y_{k,\ell})_{k,\ell =1}^m \in M_m(E_2)$ we have
$$
\n{ \big(T(x_{i,j},y_{k,\ell})) }_{M_{nm}(F)} \le \n{ (x_{i,j})_{i,j =1}^n }_{M_n(E_1)}  \n{(y_{k,\ell})_{k,\ell =1}^m}_{M_m(E_2)}.
$$
The least such $C$ is denoted $\n{T}_{jcb}$.
As in \cite[Sec. 10.1]{Effros-Ruan-book} we will denote by $M_{\infty}(E)$ the space of all infinite matrices $(x_{i,j})$ with coefficients in $E$ such that the truncated matrices are uniformly bounded, i.e., $\sup_{n \in \N} \Vert (x_{i,j})_{i,j =1}^n \Vert_{M_n(E)} < \infty$. This supremum corresponds to the norm of the element $(x_{i,j})$ in $M_{\infty}(E)$. The space $M_\infty$ of bounded scalar infinite matrices is naturally identified with $\mathcal B(\ell_2)$.

Our notation for the minimal and the projective operator space tensor products will be, respectively, $\otimes_{\min}$ and $\otimes_{\proj}$.
By an operator space cross norm $\alpha$ we mean an assignment of an operator space $E \otimes_\alpha F$ to each pair $(E,F)$ of operator spaces, in such a way that $E \otimes_\alpha F$ is the algebraic tensor product $E \otimes F$ together with a matricial norm structure on $E \otimes F$ that we write as $\alpha_n$ or $\n{\cdot}_{\alpha_n}$, and such that
$\alpha_{nm}(x \otimes y) = \n{x}_{M_n(E)} \cdot \n{y}_{M_m(F)}$ for every $x \in M_n(E), y \in M_m(F)$.
This implies that the identity map $E \otimes_{\proj} F \to E \otimes_{\alpha} F$ is completely contractive \cite[Thm. 5.5]{Blecher-Paulsen-Tensors}.
If in addition the identity map $E \otimes_\alpha F \to E \otimes_{\min} F$ is also completely contractive, we say that $\alpha$ is an \emph{operator space tensor norm}.
Moreover, an operator space tensor norm $\alpha$ is called \emph{uniform} if additionally
for any operator spaces $E_1$, $E_2$, $F_1$, $F_2$, the map

$\otimes_\alpha : \CB(E_1,E_2) \times \CB(F_1,F_2) \to \CB(E_1 \otimes_\alpha E_2, F_1 \otimes_\alpha F_2)$
given by $(S,T) \mapsto S \otimes_\alpha T$
is jointly completely contractive.

If $\alpha$ is an operator space tensor norm, the completion of the tensor product $E \otimes_\alpha F$ will be denoted by $E \widehat{\otimes}_\alpha F$.
A degree of caution is required when consulting different works dealing with operator space tensor products, since the term ``tensor norm'' is not always taken to have the exact same meaning.
We point out that our definitions are slightly different than those of  \cite{Blecher-Paulsen-Tensors,Effros-Ruan-book,dimant2015biduals}, though not in any significant way.
An operator space tensor norm $\alpha$ is said to be \emph{finitely generated} if
for any operator spaces $E$ and $F$ and $u \in M_n(E \otimes F)$,
${\alpha}_n(u ; E, F) = \inf\left\{\alpha_n(u; E_0,F_0) \colon  u \in M_n(E_0 \otimes F_0)\right\} $,
where the infimum is taken over finite-dimensional subspaces $E_0 \subset E$ and $F_0 \subset F$.
For operator space tensor norms $\alpha$ and $\beta$ and a constant $c$, we write ``$\alpha \le c \beta$ on $E\otimes F$'' to indicate that the identity map $E \otimes_\beta F \to E \otimes_\alpha F$ has $\cb$-norm at most $c$.
If no reference to spaces is made, we mean that the inequality holds for any pair of operator spaces.
A linear map $Q : E \to F$ between operator spaces is called a complete 1-quotient  if it is onto and the associated map from $E/\ker(Q)$ to $F$ is a completely isometric isomorphism.
These maps are called complete metric surjections  in \cite[Sec. 2.4]{Pisier-Operator-Space-Theory}, where it is proved that a linear map $u:E \to F$ is a complete 1-quotient if and only if its adjoint $u' : F' \to E'$ is a completely isometric embedding.

For an operator space $E$ and $1\le p \le \infty$, let us define the spaces $S_p$ and $S_p[E]$ following \cite{Pisier-Asterisque-98}.
For $1 \le p <\infty$, $S_p$ denotes the space of Schatten $p$-class operators on $\ell_2$.
In the case $p=\infty$, we denote by $S_\infty$ the space of all compact operators on $\ell_2$.
We endow $S_\infty$ with the operator space structure inherited from $B(\ell_2)$, and $S_1$ with the one induced by the duality $S_1'= B(\ell_2)$; this then determines an operator space structure on $S_p$ for each $1<p < \infty$ via complex interpolation.
More generally, we define $S_\infty[E]$ as the minimal operator space tensor product of $S_\infty$ and $E$,
and $S_1[E]$ as the operator space projective tensor product of $S_1$ and $E$.
In the case $1 < p < \infty$, $S_p[E]$ is defined via complex interpolation between $S_\infty[E]$ and $S_1[E]$.
For $1 < p \le \infty$, the dual of $S_p[E]$ can be canonically identified with $S_{p'}[E']$, where $p'$ satisfies $1/p +  1/p' = 1$.

Recall that, given Banach spaces $\mathbf X$ and $\mathbf Y$ and $1 \leq p \leq \infty$, the $p$-right Chevet-Saphar tensor norm (\cite[Chapter 6]{Ryan}, \cite[Chapter 12]{Defant-Floret}) $d_p$ of a tensor $u \in \mathbf X \otimes \mathbf Y$ is defined by
\begin{equation}\label{dp-Banach}
d_p(u) = \inf \left\{\Vert (x_j) \Vert_{\ell_{p'}\otimes_{\varepsilon} X } \Vert(y_j)\Vert_{\ell_p(Y)} : u = \sum_{j} x_j \otimes y_j \right\},
\end{equation}
where the infimum runs over  all the possible ways in which the tensor $u$ can be written as a finite sum as above.

Moving to the operator space realm, for $1 \leq p \leq \infty$, in \cite{CD-Chevet-Saphar-OS} the $p$-right Chevet-Saphar operator space tensor norm $d_p^o$ is defined in the following way: given operator spaces $E$ and $F$, for $u \in E \otimes F$,
\begin{equation}\label{dp-os}
d_p^o(u) = \inf \left\{\Vert (x_{i,j}) \Vert_{S_{p'}\otimes_{min } E} \; \Vert(y_{i,j})\Vert_{S_p[F]} : u = \sum_{i,j} x_{i,j} \otimes y_{i,j} \right\},
\end{equation}
where the infimum runs over  all the possible ways in which the tensor $u$ can be written as a finite sum as above.

The operator space structure of the tensor $E \widehat \otimes_{d_p^o} F$  is given by the following 1-quotient  (see \cite[Section 3]{CD-Chevet-Saphar-OS})
 $$
 Q^p : (S_{p'} \widehat{\otimes}_{\min} E) \widehat{\otimes}_{\proj} S_p[F] \to E \widehat{\otimes}_{d^o_p} F,$$ where $\widehat \otimes $ means the completion of the corresponding tensor product.

An operator space $E$ is called projective if, for any $\varepsilon>0$, any completely bounded map $T:E \to F/S$ into a quotient space (here $F$ is any operator space and $S \subset F$ any closed space) admits a lifting $\widetilde T: E \to F$ with $\Vert \widetilde{T} \Vert_{\cb} \leq (1+ \varepsilon) \Vert T \Vert_{\cb}$.

Following \cite[Sec. 12.2]{Effros-Ruan-book}, a \emph{mapping ideal} $(\mathfrak{A},\mathbf{A})$ is an assignment, for each pair of operator spaces $E, F$, of a linear space $\mathfrak{A}(E,F) \subseteq \CB(E,F)$ together with an operator space matrix norm $\mathbf{A}$ on $\mathfrak{A}(E,F)$ such that
\begin{enumerate}
\item[(i)] The identity map $\mathfrak{A}(E,F) \to \CB(E,F)$ is a complete contraction.
\item[(ii)] The ideal property: whenever $T \in M_n(\mathfrak{A}(E,F))$, $r \in \CB(E_0,E)$ and $s \in \CB(F,F_0)$, it follows that
$
\mathbf{A}_n( s_n \circ T \circ r ) \le \n{s}_{\cb} \mathbf{A}_n(T) \n{r}_{\cb}.
$
\end{enumerate}

Finally, a remark about our use of the word \emph{operator}.
In the Banach space literature it is usual to speak of $p$-compact \emph{operators} rather than $p$-compact \emph{mappings}, and similarly for other various classes of bounded linear transformations between normed spaces.
In order to avoid confusions, throughout this paper we reserve the word \emph{operator} for notions that are noncommutative in nature (as in \emph{operator space}), and use the word \emph{mapping} to refer to bounded linear transformations.

 \section{Completely right $p$-nuclear mappings} \label{section right $p$-nuclear}

In this section we introduce the notion of completely right $p$-nuclear mappings, inspired by the  definitions in the Banach space setting.
For $p=1$, this concept coincides with the completely nuclear mappings studied in \cite[Section 12.2]{Effros-Ruan-book}.
Recall that in the Banach space setting a linear mapping $T: \mathbf X \to \mathbf Y$ is \emph{right $p$-nuclear} (\cite{reinov2000linear},\cite[Section 6.2]{Ryan}) if  $T$ can be written as
$$ T = \sum_{n=1}^\infty x_n' {\otimes} y_n,$$
where $(x_n') \in \ell_{p'}^w(\textbf X')$, $(y_n) \in \ell_{p}(\textbf Y)$. Moreover, the right $p$-nuclear norm of $T$ is defined as

$$ \nu^p(T):=\inf \{ \Vert (x_n) \Vert_{\ell_{p'}^w(\textbf X')} \cdot \Vert (y_n) \Vert_{\ell_{p}(\textbf Y)}\},$$
where the infimum is taken all over possible representations of $T$ as above. The class of all right $p$-nuclear mappings is denoted by $\mathcal N^p$.
This definition is known to be equivalent to having a factorization
\begin{equation} \label{right p-nuclear}
\xymatrix{
\mathbf X  \ar[r]^T \ar[d]_{U} &\mathbf Y \\
\ell_{p'} \ar[r]_{D_\lambda} &  \ell_1 \ar[u]_{V},
}
\end{equation}
where $U$ and $V$ are bounded mappings, $\lambda \in \ell_p$, and $D_\lambda$ stands for the diagonal multiplication mapping $(x_n) \mapsto (\lambda_nx_n)$.  Moreover,
$
\nu^p(T) = \inf \{ \Vert U \Vert  \Vert D_{\lambda} \Vert  \Vert V \Vert \}
$
where the infimum runs over all factorizations as above.
It is well-known that right $p$-nuclear mappings between the Banach spaces $\mathbf X$ and $\mathbf Y$ are exactly those mappings which are in the range of the canonical inclusion
$
 \mathbf  X' \widehat \otimes_{d_p}\mathbf  Y  \to \mathbf X' \widehat \otimes_{\varepsilon} \mathbf Y,
$
and the right $p$-nuclear norm coincides with the quotient norm inherited from this inclusion (see also the analogous results given in \cite[22.3]{Defant-Floret}, \cite[Section 6.2]{Ryan} and \cite[Proposition 5.23]{Diestel-Jarchow-Tonge}).

Motivated by this, we introduce a corresponding notion in the category of operator spaces.
\begin{definition}\label{p-nuclear def}
Let $1 \leq p \leq \infty$. We say that a linear mapping $T : E \to F$ between operator spaces $E$ and $F$ is \emph{completely right $p$-nuclear} if it corresponds to an element in the range of the canonical inclusion
$$
J^p : E' \widehat{\otimes}_{d_p^o} F \to E' \widehat{\otimes}_{\min} F.
$$
We denote the space of all such mappings by $\mathcal{N}^p_o(E,F)$,
and we endow it with the quotient structure $(E' \widehat{\otimes}_{d_p^o} F)/ \ker J^p$. In particular, its norm -- that we denote by $\nu^p_o$ -- is the quotient norm.
\end{definition}

For future reference, it is important to remark that since the operator space structure on $E' \widehat{\otimes}_{d_p^o} F$ is itself coming from a quotient,
the above definition is equivalent to being in the range of
\begin{equation*}
J^p \circ Q^p : (S_{p'} \widehat{\otimes}_{\min} E') \widehat{\otimes}_{\proj} S_p[F] \to E' \widehat{\otimes}_{\min} F,
\end{equation*}
and this still induces the same quotient structure.

Let us start by proving that $\mathcal{N}^p_o$ is a mapping ideal. We observe, also, that each space  $\mathcal N^p_o (E,F)$ is complete.

\begin{proposition}\label{proposition:ideal-property}
$\mathcal N^p_o$ is a mapping ideal, for $1 \leq p \leq \infty$.
\end{proposition}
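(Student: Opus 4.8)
The plan is to verify both defining conditions of a mapping ideal, namely the complete contractivity of the inclusion into $\CB(E,F)$ and the ideal property, working directly from the characterization of $\mathcal{N}^p_o(E,F)$ as the range of $J^p \circ Q^p$ with the induced quotient operator space structure. The key advantage we have is that $\nu^p_o$ is a quotient norm coming from the map $J^p \circ Q^p : (S_{p'} \widehat{\otimes}_{\min} E') \widehat{\otimes}_{\proj} S_p[F] \to E' \widehat{\otimes}_{\min} F$, so every computation reduces to estimating matricial norms of preimages in the large tensor product, where the building blocks $S_{p'} \widehat{\otimes}_{\min} E'$ and $S_p[F]$ have well-understood functorial behavior.

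First I would address condition (i). Since $\nu^p_o$ is by definition the quotient norm induced by $J^p \circ Q^p$, and the target $E' \widehat{\otimes}_{\min} F$ sits inside $\CB(E,F)$ completely contractively via the canonical identification of the minimal tensor product with (approximable) completely bounded maps, the inclusion $\mathcal{N}^p_o(E,F) \to \CB(E,F)$ factors as a quotient map followed by a complete contraction, hence is itself a complete contraction. I should be careful to recall or cite that the natural map $E' \widehat{\otimes}_{\min} F \to \CB(E,F)$ is completely contractive, which is standard operator space theory; this step is essentially bookkeeping.

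The substance lies in condition (ii), the ideal property. Given $T \in M_n(\mathcal{N}^p_o(E,F))$, $r \in \CB(E_0,E)$ and $s \in \CB(F,F_0)$, I want to show $\mathbf{A}_n(s_n \circ T \circ r) \le \n{s}_{\cb}\, \mathbf{A}_n(T)\, \n{r}_{\cb}$. The strategy is to lift: choose a preimage $\widetilde{T} \in M_n\big((S_{p'} \widehat{\otimes}_{\min} E') \widehat{\otimes}_{\proj} S_p[F]\big)$ of $T$ under $J^p \circ Q^p$ with matricial norm close to $\mathbf{A}_n(T)$, then transport it by the maps $r$ and $s$. Precisely, composition with $r$ on the source corresponds to applying $r'$ to the $E'$ factor, so I would use the map $\mathrm{id}_{S_{p'}} \otimes_{\min} r' : S_{p'} \widehat{\otimes}_{\min} E' \to S_{p'} \widehat{\otimes}_{\min} E_0'$, whose $\cb$-norm equals $\n{r'}_{\cb} = \n{r}_{\cb}$ by the minimal tensor norm's functoriality; similarly, composition with $s$ on the target corresponds to applying $s$ to the $F$ factor via $\mathrm{id}_{S_p} \otimes s : S_p[F] \to S_p[F_0]$ with $\cb$-norm $\n{s}_{\cb}$. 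Then I would invoke that $\widehat\otimes_{\proj}$ is a uniform operator space tensor norm (so that the tensor product of these two completely bounded maps has $\cb$-norm bounded by the product of their $\cb$-norms), and check that the resulting element maps under $J^p \circ Q^p$ to $s_n \circ T \circ r$ by naturality of the canonical inclusions. Taking the infimum over liftings $\widetilde{T}$ yields the desired inequality.

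The main obstacle I anticipate is the naturality verification: confirming that the various maps $\mathrm{id}_{S_{p'}} \otimes_{\min} r'$, $\mathrm{id}_{S_p} \otimes s$, the quotient $Q^p$, and the inclusion $J^p$ all fit into a commuting diagram so that transporting a preimage of $T$ really produces a preimage of $s_n \circ T \circ r$. This is where one must be meticulous that the operator-space-level identifications (in particular the passage $S_p[F] \to S_p[F_0]$ and the behavior of $d_p^o$ under composition) are compatible with the abstract definition of $\mathcal{N}^p_o$ as a range. The completeness of each $\mathcal{N}^p_o(E,F)$ follows formally, since a range equipped with a quotient norm from a complete space is complete precisely when the preimage of $\{0\}$, i.e. $\ker(J^p \circ Q^p)$, is closed; as this kernel is automatically closed (the maps are continuous), the quotient is a Banach space, and the operator space structure is inherited accordingly.
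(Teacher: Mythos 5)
Your proof is correct, and its skeleton is the same as the paper's: lift $T$ to a preimage under the quotient map, transport that preimage by maps built functorially from $r'$ and $s$, verify the naturality square, and take the infimum; your treatment of condition (i) and of completeness also matches the paper's (terse) remarks. The genuine difference is the level at which you lift and, consequently, which uniformity facts you invoke. The paper lifts only to $t \in M_n(E' \widehat{\otimes}_{d_p^o} F)$ and applies $(S' \otimes R)_n$ there, citing a single black-box ingredient: $d_p^o$ is a \emph{uniform} operator space tensor norm (from \cite{CD-Chevet-Saphar-OS}). You instead lift all the way to $M_n\big((S_{p'} \widehat{\otimes}_{\min} E') \widehat{\otimes}_{\proj} S_p[F]\big)$ --- legitimate, since the paper records that $J^p \circ Q^p$ induces the same quotient structure as $J^p$ --- and replace that one ingredient by three more standard ones: functoriality of $\otimes_{\min}$ for $\mathrm{id}_{S_{p'}} \otimes r'$, Pisier's estimate $\n{\mathrm{id}_{S_p} \otimes s}_{\cb} \le \n{s}_{\cb}$ for $S_p[F] \to S_p[F_0]$ (see \cite{Pisier-Asterisque-98}; for $1<p<\infty$ this rests on interpolation), and uniformity of $\otimes_{\proj}$. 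In effect you re-derive the needed instance of the uniformity of $d_p^o$ rather than quoting it. What the paper's route buys is brevity and a cleaner conceptual statement --- the same two-line argument shows that any tensor norm that is uniform yields a mapping ideal in this way; what your route buys is self-containedness, since it leans only on properties of $\min$, $\proj$ and $S_p[\,\cdot\,]$ that the paper needs elsewhere anyway, at the price of the longer naturality verification you correctly single out as the crux (checking on elementary tensors that $Q^p$ intertwines $(\mathrm{id}_{S_{p'}} \otimes r') \otimes (\mathrm{id}_{S_p} \otimes s)$ with $r' \otimes s$, which indeed goes through because $Q^p$ is just contraction of matrix indices).
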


\begin{proof}
Given $T \in M_n (\mathcal N_o^p (E;F))$, the definition of right $p$-nuclearity yields  $\Vert T \Vert_{M_n(\mathcal CB(E;F))} \leq \Vert T \Vert_{M_n(\mathcal N_o^p(E;F))}$.
Also for $S \in \CB(E_0,E)$ and $R \in \CB(F,F_0)$ we have $R_nTS \in M_n\left(\mathcal{N}^p_o(E_0,F_0)\right)$ and
$$
\Vert R_nTS \Vert_{M_n(\mathcal N^p_o(E_0,F_0))} \le \n{R}_{\cb} \Vert T \Vert_{M_n(\mathcal N^p_o(E;F))} \n{S}_{\cb}.
$$
Indeed, by definition there exists $t \in M_n(E' \widehat{\otimes}_{d_p^o} F)$ such that $(J^p)_n(t) = T$.
Since $d_p^o$ is a uniform operator space tensor norm,
$$
d_p^o\big( (S' \otimes R)_n(t) \big) \le \n{S'}_{\cb} d_p^o(t) \n{R}_{\cb} = \n{S}_{\cb} d_p^o(t) \n{R}_{\cb}.
$$
Note that $(J^p)_n \big( (S' \otimes R)_n(t)  \big) = R_nTS$, so $R_nTS \in M_n(\mathcal{N}^p_o(E_0,F_0))$ and
$$
\Vert R_nTS \Vert_{M_n(\mathcal N^p_o(E_0,F_0))} \le \n{S}_{\cb} d_p^o(t) \n{R}_{\cb}.
$$
Taking the infimum over all $t$ we get the desired conclusion.
\end{proof}

The following result shows that the formula given in Equation \eqref{dp-os} can be extended for tensors that lie in the completion $ E \widehat{\otimes}_{d_p^o} F$.

\begin{theorem}\label{thm:hard-Chevet-Saphar}
Let $1 \leq p \leq \infty$, let $E$ and $F$ be operator spaces, and $u \in E \widehat{\otimes}_{d_p^o} F$. Then
$$
\n{u}_{d_p^o} = \inf \big\{   \n{(x_{ij})}_{S_{p'}\widehat{\otimes}_{\min} E} \n{(y_{ij})}_{S_{p}[F]} \; : \; u = \sum_{i,j} x_{ij} \otimes y_{ij}    \big\}.
$$
\end{theorem}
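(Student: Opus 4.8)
The plan is to prove the stated equality as two inequalities; throughout, write $\rho(u)$ for the infimum on the right-hand side.

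\emph{The inequality $\n{u}_{d_p^o}\le\rho(u)$ is the soft one.} Any admissible representation $u=\sum_{i,j}x_{ij}\otimes y_{ij}$, with $(x_{ij})\in S_{p'}\widehat{\otimes}_{\min}E$ and $(y_{ij})\in S_p[F]$, is precisely the image under $Q^p$ of the elementary tensor $(x_{ij})\otimes(y_{ij})$, since $Q^p$ is the trace contraction. As $Q^p$ is a complete $1$-quotient (hence completely contractive) and $\widehat{\otimes}_{\proj}$ is an operator space cross norm,
\[ \n{u}_{d_p^o}=\n{Q^p\big((x_{ij})\otimes(y_{ij})\big)}_{d_p^o}\le\n{(x_{ij})\otimes(y_{ij})}_{\proj}=\n{(x_{ij})}_{S_{p'}\widehat{\otimes}_{\min}E}\,\n{(y_{ij})}_{S_p[F]}. \]
Taking the infimum over all representations yields $\n{u}_{d_p^o}\le\rho(u)$.

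\emph{For the reverse inequality I would argue by density and assembly.} Fix $\eps>0$. Since $E\otimes F$ is $d_p^o$-dense in the completion, choose $u_N\to u$ and telescope to write $u=\sum_N v_N$ with $v_N\in E\otimes F$ and $\sum_N\n{v_N}_{d_p^o}\le\n{u}_{d_p^o}+\eps$. Each $v_N$ lies in the algebraic tensor product, so the finite-level formula \eqref{dp-os} provides a \emph{finite} representation $v_N=\sum_{i,j}x^{(N)}_{ij}\otimes y^{(N)}_{ij}$ whose product of norms is within $\eps2^{-N}$ of $\n{v_N}_{d_p^o}$; after rescaling the two factors against each other I may assume $\n{(x^{(N)}_{ij})}_{S_{p'}\widehat{\otimes}_{\min}E}=\lambda_N^{1/p'}$ and $\n{(y^{(N)}_{ij})}_{S_p[F]}=\lambda_N^{1/p}$, where $\lambda_N=\n{v_N}_{d_p^o}+\eps2^{-N}$ (with the evident modifications when $p\in\{1,\infty\}$). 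I then assemble the $(x^{(N)}_{ij})$ and the $(y^{(N)}_{ij})$ into single infinite matrices $X$ and $Y$ by placing the $N$-th representation on the $N$-th diagonal block of disjoint index ranges. Because the blocks are pairwise disjoint, the trace contraction defining $Q^p$ produces no cross terms, so $Q^p(X\otimes Y)=\sum_N\sum_{i,j}x^{(N)}_{ij}\otimes y^{(N)}_{ij}=\sum_N v_N=u$; and if one has the block estimates $\n{X}_{S_{p'}\widehat{\otimes}_{\min}E}\le(\sum_N\lambda_N)^{1/p'}$ and $\n{Y}_{S_p[F]}\le(\sum_N\lambda_N)^{1/p}$, then $\n{X}\,\n{Y}\le\sum_N\lambda_N\le\n{u}_{d_p^o}+2\eps$, whence $\rho(u)\le\n{u}_{d_p^o}$.

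\emph{The main obstacle is exactly these two block (direct-sum) estimates, together with the convergence of $X$ and $Y$ in the respective completions.} For $Y$ this is routine: $S_p[F]$ is the complex-interpolation scale between $S_\infty[F]$, where disjoint blocks contribute their supremum, and $S_1[F]$, where they contribute their sum, so the block norm is dominated by the $\ell_p$-combination of the block norms. The delicate point is the companion bound for the first factor, because $S_{p'}\otimes_{\min}E$ is the \emph{minimal} tensor product and need not coincide with the interpolation space $S_{p'}[E]$; moreover the Banach-space argument --- in which the injective norm is a supremum over \emph{scalar} functionals and the estimate passes blockwise through that supremum --- does not transcribe directly, since the operator-space minimal norm must instead be tested against matrix-valued maps $E\to M_k$. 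I expect the crux of the proof to be the lemma
\[ \Bn{\,\bigoplus\nolimits_N (x^{(N)}_{ij})\,}_{S_{p'}\otimes_{\min}E}\le\Big(\sum\nolimits_N\n{(x^{(N)}_{ij})}_{S_{p'}\otimes_{\min}E}^{\,p'}\Big)^{1/p'}, \]
which I would attack by using the defining supremum of the minimal norm to reduce to the matrix-coefficient spaces $S_{p'}\otimes_{\min}M_k$ and then analyzing disjoint Schatten blocks there. A structurally different route avoiding an explicit block lemma is to use that $Q^p$ is a metric surjection to lift $u$ to some $w$ with $\n{w}_{\proj}<\n{u}_{d_p^o}+\eps$ and to expand $w$ through the representation of the operator-space projective tensor product; but then the scalar ``glue'' matrices appearing there must be absorbed into the Schatten indices via H\"older and module properties of $S_{p'}$ and $S_p[F]$, so the same difficulty resurfaces in a different guise.
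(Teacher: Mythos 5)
Your first inequality and the overall skeleton of your converse (write $u$ as a norm-convergent series of algebraic tensors, represent each piece via the finite-level formula \eqref{dp-os}, and assemble the pieces as disjoint diagonal blocks of two infinite matrices) coincide with the paper's proof. The divergence --- and the genuine gap --- is your choice of a \emph{balanced} H\"older rescaling $\n{(x^{(N)}_{ij})}=\lambda_N^{1/p'}$, $\n{(y^{(N)}_{ij})}=\lambda_N^{1/p}$, which forces you to prove that disjoint diagonal blocks combine in an $\ell_{p'}$-fashion in $S_{p'}\widehat{\otimes}_{\min}E$ (and in an $\ell_p$-fashion in $S_p[F]$). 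You correctly isolate this block lemma as the crux, but you do not prove it, so the proposal is incomplete as written. For what it is worth, the lemma is true: since the operator space structure of $S_{p'}$ is defined so that $M_k(S_{p'})=[M_k(S_\infty),M_k(S_1)]_{1/p'}$, one can interpolate the two endpoint facts --- disjoint blocks are sup-additive in $M_k(S_\infty)$ and sum-subadditive in $M_k(S_1)=\CB(S_\infty,M_k)$ --- uniformly in $k$ (using interpolation of $\ell_\infty$- and $\ell_1$-direct sums), and then pass the matrix-level estimate through the defining supremum of the minimal norm, exactly as you suggest. But this is real work, and it is machinery the paper never needs.

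The paper avoids the lemma entirely by making the decomposition lopsided rather than balanced: it chooses $u=\sum_m u_m$ with $d_p^o(u_1)<\n{u}_{d_p^o}+\eta$ and $d_p^o(u_m)\le \eta^2/4^m$ for $m\ge 2$, and rescales the representations so that the first block has $x$-factor at most $\n{u}_{d_p^o}+\eta$ and $y$-factor at most $1$, while every tail block has \emph{both} factors at most $\eta/2^m$. Since a block supported in a corner of the infinite matrix has the same norm as the block itself, the plain triangle inequality in $S_{p'}\widehat{\otimes}_{\min}E$ and in $S_p[F]$ then gives $\n{(x_{ij})}\le \n{u}_{d_p^o}+2\eta$ and $\n{(y_{ij})}\le 1+\eta$; no block additivity beyond the triangle inequality is ever invoked. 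So either supply the interpolation lemma to complete your route, or switch to the asymmetric splitting, which closes the gap with tools you already have.
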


\begin{proof} Let $u \in E \widehat{\otimes}_{d_p^o} F$.
It is clear that $\n{u}_{d_p^o} \le   \n{(x_{ij})}_{S_{p'}\widehat{\otimes}_{\min} E} \n{(y_{ij})}_{S_{p}[F]}$, for any representation of $u= \sum_{i,j} x_{ij} \otimes y_{ij}= Q^p\left(( x_{ij})\otimes( y_{ij})\right)$, since $Q^p$ is a 1-quotient mapping.

Also,  for every $\eta >0$ there is a sequence $(u_m) \in E \otimes F$ such that $u = \sum_{m} u_m \in E \widehat{\otimes}_{d_p^o} F$ with $d_p^o(u_1) < d_p^o(u) + \eta$ and
$ d_p^o(u_m) \leq \frac{\eta^2}{4^m}$ for every $ m \geq 2$. Now, we can write
$u_1 := \sum_{i,j =  1}^{k_1} x_{i,j} \otimes y_{i,j}$ where
$\n{(x_{ij})_{i,j=1}^{k_1}}_{S_{p'}^{k_1} \otimes_{\min} E} \leq d_p^o(u) + \eta$ and $\n{(y_{ij})_{i,j=1}^{k_1}}_{S_p[F]} \leq 1$. Also, for $m \geq 2$, we may represent $u_m : = \sum_{i,j = k_{m-1} + 1}^{k_m} x_{i,j} \otimes y_{i,j}$  with
$\n{(x_{ij})_{i,j={k_{m-1} + 1}}^{k_2}}_{S_{p'}^{k_2} \otimes_{\min} E} \leq \frac{\eta}{2^m}$ and $\n{(y_{ij})_{i,j={k_{m-1} + 1}}^{k_2}}_{S_p[F]} \leq \frac{\eta}{2^m}.$
By the triangle inequality we derive $\n{(y_{ij})}_{S_{p}[F]} \leq 1+ \eta \sum_{m=2}^\infty 2^{-m}$ and also
$\n{(x_{ij})}_{S_{p'}\widehat{\otimes}_{\min} E} \leq d_p^o(u) + \eta + \eta \sum_{m=2}^\infty 2^{-m}$, which concludes the proof.
\end{proof}

We now introduce a non-commutative version of the sequence space $\ell_p^w(E)$, namely

$$ S_{p}^w[E]:=\{ x \in M_{\infty}(E) :   \sup_{N} \Vert (x_{ij})_{i,j=1}^N  \Vert_{S_p^N \widehat{\otimes}_{\min} E } < \infty \}. $$

It can be easily seen that this is an operator space endowed with the matricial norm structure given by

$$ \Vert  \big (x_{ij}^{k,l})_{i,j} \big)_{k,l=1}^n  \Vert_{M_n(S_{p}^w[E])} := \sup_{N} \Vert \big((x_{ij}^{k,l})_{i,j=1}^N \big)_{k,l=1}^{n} \Vert_{M_n(S_p^N \widehat{\otimes}_{\min} E) }.$$

Recall that $\ell_p^w(E)$ can be identified with the space of bounded linear mappings from $\ell_{p'}$ to $E$ \cite[Prop. 8.2.(1)]{Defant-Floret}.
The following is the analogous statement for $S_{p}^w[E]$.

\begin{lemma}\label{lemma-Spweak-as-mappings}
For $1 \leq p \leq \infty$, we have the following completely isometric identification $$S_{p}^w[E] = \CB(S_{p'},E).$$
\end{lemma}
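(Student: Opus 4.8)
The plan is to reduce the statement to the standard finite-dimensional identity relating the minimal tensor product to completely bounded maps, and then pass to the limit $N\to\infty$. First I would recall the well-known fact that for a finite-dimensional operator space $V$ and any operator space $W$ there is a completely isometric identification $V \otimes_{\min} W = \CB(V',W)$, under which an elementary tensor $v\otimes w$ corresponds to the map $\varphi \mapsto \langle v,\varphi\rangle\, w$ (see e.g. \cite{Effros-Ruan-book,Pisier-Operator-Space-Theory}). I would apply this with $V = S_p^N$, whose operator space dual is $S_{p'}^N$; at the finite level $(S_p^N)' = S_{p'}^N$ holds for all $1\le p\le\infty$, the endpoints being covered since $S_\infty^N = M_N$. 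This produces, for each $N$, a completely isometric identification $\Phi_N : S_p^N \widehat{\otimes}_{\min} E \to \CB(S_{p'}^N, E)$, where $\Phi_N\big((x_{ij})_{i,j\le N}\big)$ is the map $a \mapsto \sum_{i,j\le N} a_{ij}\, x_{ij}$ for $a=(a_{ij})\in S_{p'}^N$.

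Next I would assemble these finite-level maps into a single one. Given $(x_{ij})\in M_\infty(E)$, the truncations $T_N := \Phi_N\big((x_{ij})_{i,j\le N}\big)$ are compatible: since $S_{p'}^N \subset S_{p'}^{N+1}$ completely isometrically and $T_{N+1}$ ignores the entries outside the $N\times N$ corner, one has $T_{N+1}|_{S_{p'}^N} = T_N$. Hence the $T_N$ determine a linear map $T$ on $\bigcup_N S_{p'}^N$, the finitely supported matrices, which is dense in $S_{p'}$ for every $1\le p'\le\infty$ (recall $S_\infty = \mathcal{K}(\ell_2)$, so finite-rank operators are dense). By the definition of the $S_p^w[E]$-norm and the isometry $\Phi_N$, $\n{(x_{ij})}_{S_p^w[E]} = \sup_N \n{(x_{ij})_{i,j\le N}}_{S_p^N \widehat{\otimes}_{\min} E} = \sup_N \n{T|_{S_{p'}^N}}_{\cb}$. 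Since the completely bounded norm of a map on $S_{p'}$ equals the supremum of the cb-norms of its restrictions to the subspaces $S_{p'}^N$, whose union is dense, this supremum is finite exactly when $T$ extends to a completely bounded map $S_{p'}\to E$, and in that case it equals $\n{T}_{\cb}$. This yields the isometric bijection $S_p^w[E] = \CB(S_{p'},E)$.

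To upgrade this to a \emph{complete} isometry I would run the identical argument one matrix level higher, i.e. with $E$ replaced by $M_n(E)$. Using the standard property $M_n\big(S_p^N \widehat{\otimes}_{\min} E\big) = S_p^N \widehat{\otimes}_{\min} M_n(E)$ of the minimal tensor product together with $\Phi_N$ applied to $M_n(E)$, and the canonical identity $M_n\big(\CB(S_{p'},E)\big) = \CB\big(S_{p'},M_n(E)\big)$, the computation above gives $\n{\cdot}_{M_n(S_p^w[E])} = \n{\cdot}_{\CB(S_{p'},M_n(E))} = \n{\cdot}_{M_n(\CB(S_{p'},E))}$ for every $n$, which is precisely the assertion that the identification is completely isometric.

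The step I expect to demand the most care is the passage to the limit: justifying that the completely bounded norm of $T$ on all of $S_{p'}$ is recovered as $\sup_N \n{T|_{S_{p'}^N}}_{\cb}$, and that a uniform bound on the truncations yields a genuine completely bounded extension to $S_{p'}$. This rests on the complete isometric embeddings $S_{p'}^N \hookrightarrow S_{p'}$ and on the density of the finitely supported matrices in $S_{p'}$; the latter is what rules out a pathology at the endpoints, and it is exactly where it matters that the relevant space is $S_{p'}$ (with $S_\infty = \mathcal{K}(\ell_2)$) rather than $\mathcal{B}(\ell_2)$.
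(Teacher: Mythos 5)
Your proof is correct and follows essentially the same route as the paper's: both rest on the finite-level complete isometry $S_p^N \widehat{\otimes}_{\min} E = \CB(S_{p'}^N,E)$, assemble the compatible truncations into a map on the dense union $\bigcup_N S_{p'}^N \subset S_{p'}$ with uniformly bounded amplifications, and then obtain the complete isometry by repeating the argument with $E$ replaced by $M_n(E)$, using $M_n\big(S_p^N \widehat{\otimes}_{\min} E\big) = S_p^N \widehat{\otimes}_{\min} M_n(E)$ and $M_n\big(\CB(S_{p'},E)\big) = \CB\big(S_{p'},M_n(E)\big)$. No gaps worth flagging.
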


\begin{proof}
Let us see first that the spaces $S_{p}^w[E]$ and $\CB(S_{p'},E)$ are isometrically isomorphic. For each $N\in\mathbb N$, we denote $\rho_N:M_{\infty}(E)\to M_N(E)$ the $N$-truncation given by $\rho_N\left((x_{ij})_{ij}\right)=(x_{i,j})_{i,j=1}^N$. Each $X \in S_p^w[E]$ satisfies that
$$
\|X\|_{S_p^w[E]}=\sup_N \|\rho_N(X)\|_{S_p^N \widehat{\otimes}_{\min} E}=\sup_N \|\rho_N(X)\|_{\CB(S_{p'}^N,E)}
$$ since the spaces $S_p^N \widehat{\otimes}_{\min} E$ and $\CB(S_{p'}^N,E)$ are isometric. Thus, the mapping $T_X: \bigcup_{N=1}^\infty S_{p'}^N \to E$ is well defined (given $A \in S_{p'}^N$ for some $N \in \mathbb{N}$, $T_X(A):=\rho_N(X) (A)$) and uniquely extends to $S_{p'}$  by density.
The same holds with the $n$-amplification $(T_X)_n:M_n(\bigcup_{N=1}^\infty S_{p'}^N) \to M_n(E)$ (whose norm is obviously bounded by $\Vert X \Vert_{S_p^w[E]}$), and extends to $M_n(S_{p'})$. Hence, $T_X\in \CB(S_{p'},E)$ with $\|T_X\|_{\CB(S_{p'},E)}\le \Vert X \Vert_{S_p^w[E]}$.

Conversely, for $T\in \CB(S_{p'},E)$, denoting by $i_N:S_{p'}^N\to S_{p'}$ the canonical completely isometric inclusion, we derive that $T\circ i_N\in \CB(S_{p'}^N,E)$ with $\|T\circ i_N\|_{\CB(S_{p'}^N,E)}\le \|T\|_{\CB(S_{p'},E)}$, for each $N\in\mathbb N$. Now, defining $X_T\in M_\infty (E)$ by $X_T=\left(Te_{ij}\right)_{i,j}$ it is plain, through the identity $S_p^N \widehat{\otimes}_{\min} E=\CB(S_{p'}^N,E)$, that
$$
\|X_T\|_{S_p^w[E]}=\sup_N\|\rho_N(X_T)\|_{S_p^N \widehat{\otimes}_{\min} E}=\sup_N\|T\circ i_N\|_{\CB(S_{p'}^N,E)}\le  \|T\|_{\CB(S_{p'},E)}.
$$

Hence, we have proved the isometry $S_{p}^w[E] = \CB(S_{p'},E)$. To see that this isometry is  complete, we need to show that, for each $n$, the spaces $M_n\left(S_{p}^w[E]\right)$ and $M_n\left(\CB(S_{p'},E)\right)$ are isometric. Recall that
$$
M_n\left(\CB(S_{p'},E)\right) \overset{1}{=} \CB(S_{p'},M_n(E))  \overset{1}{=} S_{p}^w[M_n(E)],
$$ where the last equality follows from the first part of the proof. To finish it only remains to see that $ S_{p}^w[M_n(E)]\overset{1}{=} M_n\left(S_{p}^w[E]\right)$. Indeed, by \cite[Cor. 8.1.3 and Cor. 8.1.7]{Effros-Ruan-book}, for all $N$ the spaces $S_p^N\widehat{\otimes}_{\min} M_n(E)$ and $M_n\left(S_p^N \widehat{\otimes}_{\min} E\right)$ are completely isometric. Therefore, for each $X\in M_n\left(S_{p}^w[E]\right)$, it holds
$$
\|X\|_{M_n\left(S_{p}^w[E]\right)}=\sup_N \|(\rho_N)_n(X)\|_{M_n\left(S_p^N \widehat{\otimes}_{\min} E\right)} =\sup_N \|(\rho_N)_n(X)\|_{S_p^N\widehat{\otimes}_{\min} M_n(E)} = \|X\|_{S_{p}^w[M_n(E)]}.
$$

\end{proof}

As in the Banach space setting, we can replace $S_{p'}\widehat{\otimes}_{\min} E$ by $S_{p'}^w[E]$ in the quotient description  of the norm $d_p^o$.

\begin{theorem} \label{Thm:Q-tilde}
For $1 \leq p \leq \infty$, the mapping
$$
\widetilde{Q}^p: S_{p'}^w[E] \widehat{\otimes}_{\proj} S_p[F] \to E \widehat{\otimes}_{d^o_p} F,
$$ is a complete 1-quotient.
\end{theorem}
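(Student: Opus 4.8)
The plan is to realize $\widetilde{Q}^p$ as a completely contractive extension of $Q^p$ along the natural inclusion of $S_{p'}\widehat{\otimes}_{\min}E$ into $S_{p'}^w[E]$, and then to deduce the $1$-quotient property formally from the fact that $Q^p$ already is one. First I would record that the canonical inclusion $\iota\colon S_{p'}\widehat{\otimes}_{\min}E\to S_{p'}^w[E]$ is a complete isometry onto a closed subspace: for $z\in S_{p'}\widehat{\otimes}_{\min}E$ one has $\n{z}_{S_{p'}\widehat{\otimes}_{\min}E}=\sup_N\n{\rho_N(z)}_{S_{p'}^N\otimes_{\min}E}$, because each compression $\rho_N$ is completely contractive and $\rho_N(z)\to z$, and the same computation works at every matrix level. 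Next I would define the bilinear map $\Phi\colon S_{p'}^w[E]\times S_p[F]\to E\widehat{\otimes}_{d_p^o}F$ by $\Phi\big((x_{ij}),(y_{ij})\big)=\sum_{i,j}x_{ij}\otimes y_{ij}$ and check convergence. Writing $u_N=\sum_{i,j\le N}x_{ij}\otimes y_{ij}$, the difference $u_N-u_{N'}$ is represented by the matrices obtained from $(x_{ij})$ and $(y_{ij})$ by zeroing out the entries off the frame $[1,N]^2\setminus[1,N']^2$; Theorem~\ref{thm:hard-Chevet-Saphar} then bounds $d_p^o(u_N-u_{N'})$ by the product of the corresponding truncated norms. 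The $S_{p'}^w[E]$-factor stays bounded by $\n{(x_{ij})}_{S_{p'}^w[E]}$ (compressions are contractive), while the $S_p[F]$-factor tends to $0$ because finitely supported matrices are dense in $S_p[F]$ for every $1\le p\le\infty$ (including $S_\infty[F]=S_\infty\otimes_{\min}F$). Hence $(u_N)$ is Cauchy, $\Phi$ is well defined, and by construction $\widetilde{Q}^p\circ(\iota\otimes\mathrm{id}_{S_p[F]})=Q^p$.

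The heart of the argument is to show that $\Phi$ is jointly completely contractive, which by the universal property of $\widehat{\otimes}_{\proj}$ yields the complete contraction $\widetilde{Q}^p$ on $S_{p'}^w[E]\widehat{\otimes}_{\proj}S_p[F]$. Fix $\mathbf x=(x^{(kl)})\in M_n(S_{p'}^w[E])$ and $\mathbf y=(y^{(rs)})\in M_m(S_p[F])$, and let $\mathbf y_N=(\rho_N(y^{(rs)}))$ be the entrywise $N$-truncation. The key observation is that, once $y^{(rs)}_N$ is supported in the corner $[1,N]^2$, only $\rho_N(x^{(kl)})$ contributes to $\Phi(x^{(kl)},y^{(rs)}_N)$, so that $\Phi(x^{(kl)},y^{(rs)}_N)=Q^p\big(\rho_N(x^{(kl)})\otimes y^{(rs)}_N\big)$; on truncated data $\Phi$ is exactly the bilinear map underlying $Q^p$. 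Complete contractivity of $Q^p$, together with the fact that the compressions $\rho_N$ are completely contractive on $S_{p'}^w[E]$ and on $S_p[F]$ and that $\iota$ is a complete isometry, then gives $\n{(\Phi(x^{(kl)},y^{(rs)}_N))}_{M_{nm}(E\widehat{\otimes}_{d_p^o}F)}\le \n{\mathbf x}_{M_n(S_{p'}^w[E])}\,\n{\mathbf y}_{M_m(S_p[F])}$ for every $N$, where I use the finite-dimensional identification $S_{p'}^N\otimes_{\min}E=\CB(S_p^N,E)$ (the $p\leftrightarrow p'$ version of Lemma~\ref{lemma-Spweak-as-mappings}) to control the norm of $\rho_N(x^{(kl)})$. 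Since $\Phi(x^{(kl)},\cdot)$ is continuous and $y^{(rs)}_N\to y^{(rs)}$ in $S_p[F]$, letting $N\to\infty$ yields the same bound with $\mathbf y_N$ replaced by $\mathbf y$, which is precisely joint complete contractivity.

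Finally, the $1$-quotient property is formal. Since $\widehat{\otimes}_{\proj}$ is uniform and $\iota$ is a complete contraction, $\iota\otimes\mathrm{id}_{S_p[F]}$ is completely contractive, and $Q^p=\widetilde{Q}^p\circ(\iota\otimes\mathrm{id}_{S_p[F]})$. If a complete metric surjection factors as a composite of two complete contractions, the outer factor must itself be a complete metric surjection: at each matrix level, contractivity of the inner map sends the open unit ball into the open unit ball, so the image of the open unit ball under $\widetilde{Q}^p$ contains the image under $Q^p$, namely the whole open unit ball of $E\widehat{\otimes}_{d_p^o}F$, while complete contractivity of $\widetilde{Q}^p$ gives the reverse inclusion. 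As $Q^p$ is a complete $1$-quotient, so is $\widetilde{Q}^p$. The step I expect to require the most care is the reduction in the middle paragraph: one cannot truncate the $S_{p'}^w[E]$ factor directly, since truncations of a weakly $p$-summable matrix need not converge to it, so the whole argument must be arranged so that truncating the $S_p[F]$ factor (where finitely supported matrices are dense) automatically confines the first factor to a finite corner, with the norm bookkeeping routed through $S_{p'}^N\otimes_{\min}E=\CB(S_p^N,E)$.
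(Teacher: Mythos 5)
Your proposal is correct and follows essentially the same route as the paper's proof: truncate, use convergence of truncations in $S_p[F]$ to get Cauchy-ness and well-definedness, bound matrix norms through the complete contractivity of $Q^p$ applied to truncated data, and deduce the $1$-quotient property formally from the factorization $Q^p=\widetilde{Q}^p\circ(\iota\otimes\mathrm{id})$ together with complete contractivity of $\widetilde{Q}^p$. The only cosmetic difference is that you linearize a jointly completely contractive bilinear map via the universal property of $\widehat{\otimes}_{\proj}$, whereas the paper carries out the equivalent estimate directly on representations $A=\alpha(X\otimes Y)\beta$ of matrices in the projective tensor product.
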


\begin{proof} We have just to prove that $\widetilde{Q}^p$ defines a complete contraction. The fact that $Q^p$ is a complete 1-quotient and the complete isometry $S_{p'}\widehat{\otimes}_{\min} E\hookrightarrow S_{p'}^w[E]$ do the rest.

Let us denote by $\tau^k: S_{p'}^w[E]\to S_{p'}^k\otimes_{\min} E$ the truncation mapping given by $\tau^k\left((x_{ij})_{i,j}\right)=(x_{ij})_{i,j=1}^k$. It is plain, due to the operator space structure of $S_{p'}^w[E] $, that $\tau^k$ is a complete contraction. We note by $\widetilde{\tau}^k$ the ``same''
 truncation mapping but on a different space, $\widetilde{\tau}^k:S_p[F]\to S_p[F]$.

Let $(x_{ij}) \in S_{p'}^w[E]$ and $(y_{ij}) \in S_{p}[F]$. To see that $\sum_{i,j} x_{ij} \otimes y_{ij}$ converges in  $E \widehat{\otimes}_{d_p^o} F$ we prove that $(u_k)$ is a Cauchy sequence, where $u_k = \sum_{i,j=1}^n  x_{ij} \otimes y_{ij}$.
By definition,
$$d_p^o(u_k - u_l ) \leq \n{\tau^k(x_{ij})-\tau^l(x_{ij})}_{S_{p'}^m \otimes_{\min} E} \n{\widetilde{\tau}^k(y_{ij})-\widetilde{\tau}^l(y_{ij})}_{S_p[F]}.$$
Note that $\n{\tau^k(x_{ij})-\tau^l(x_{ij})}_{S_{p'}^m\otimes_{\min} E} \leq 2\n{(x_{ij})}_{S_{p'}^w[E]} $
and by \cite[Lemma 1.12]{Pisier-Asterisque-98}, $\n{\widetilde{\tau}^k(y_{ij})-\widetilde{\tau}^l(y_{ij})}_{S_p[F]} \leq \varepsilon$ for $k,\ l$ sufficiently large.
So, $(u_k)_k$ converges to $u = \sum_{i,j} x_{ij} \otimes y_{ij} $ and
$d_p^o(u) \leq \n{(x_{ij})}_{S_{p'}^w[E]} \n{(y_{ij})}_{S_{p}[F]}$. Thus, $\widetilde{Q}^p$ is  well defined.

Now, let $n\in\mathbb N$ fixed. We need to prove that
$$
\left\|(\widetilde{Q}^p)_n: M_n\left(S_{p'}^w[E] \widehat{\otimes}_{\proj} S_p[F]\right)\to M_n\left( E \widehat{\otimes}_{d^o_p} F\right)\right\|\le 1.
$$

 Recall that, given $r\in\mathbb N$ and $Y\in M_r(S_p[F])$, by \cite[Lemma 1.12]{Pisier-Asterisque-98}, it holds
 $$
 \left\|(\widetilde\tau^k)_r(Y)-Y\right\|_{M_r(S_p[F])}\underset{k\to\infty}{\longrightarrow} 0.
 $$

 For $A\in M_n\left(S_{p'}^w[E] {\otimes}_{\proj} S_p[F]\right)$ take any representation of the form $A=\alpha (X\otimes Y)\beta$, with $X\in M_q\left(S_{p'}^w[E]\right)$, $Y\in M_r\left(S_p[F]\right)$, $\alpha\in M_{n,q\cdot r}$, $\beta\in M_{q\cdot r, n}$. By arguing as in the beginning of the proof we obtain
$$
 (Q^p)_n  (\tau^k\otimes\widetilde\tau^k)_n(A)=(\widetilde Q^p)_n  (\tau^k\otimes\widetilde\tau^k)_n(A) \underset{k\to\infty}{\longrightarrow} (\widetilde Q^p)_n (A), \textrm{ in } M_n (E \widehat{\otimes}_{d^o_p} F).
$$
 Also, the fact that $Q^p$ is a complete 1-quotient implies, for each $k$,
\begin{eqnarray*}
 \left\|(Q^p)_n  (\tau^k\otimes\widetilde\tau^k)_n(A)\right\|_{M_n (E \widehat{\otimes}_{d^o_p} F)} & = & \left\|(Q^p)_n \left(\alpha (\tau_q^k(X) \otimes \widetilde\tau^k_r(Y)\right)\beta\right\|_{M_n (E \widehat{\otimes}_{d^o_p} F)} \\
 &\le & \|\alpha\|_{M_{n,q\cdot r}} \cdot \|\tau_q^k(X) \|_{M_q(S_{p'}^k\otimes_{\min} E)} \cdot \|\widetilde\tau^k_r(Y)\|_{M_r(S_p[F])} \cdot \|\beta\|_{M_{q\cdot r, n}}\\
 &\le & \|\alpha\|_{M_{n,q\cdot r}} \cdot \|X \|_{M_q\left(S_{p'}^w[E]\right)} \cdot \|\widetilde\tau^k_r(Y)\|_{M_r(S_p[F])} \cdot \|\beta\|_{M_{q\cdot r, n}}.
\end{eqnarray*}
Taking limit as $k\to\infty$ we obtain
$$
\left\|(\widetilde Q^p)_n (A)\right\|_{M_n (E \widehat{\otimes}_{d^o_p} F)} \le \|\alpha\|_{M_{n,q\cdot r}} \cdot \|X \|_{M_q\left(S_{p'}^w[E]\right)} \cdot \|Y\|_{M_r(S_p[F])} \cdot \|\beta\|_{M_{q\cdot r, n}}.
$$
Since this holds for every representation of $A$ it is clear that
$$
\left\|(\widetilde Q^p)_n (A)\right\|_{M_n (E \widehat{\otimes}_{d^o_p} F)} \le \|A\|_{M_n\left(S_{p'}^w[E] {\otimes}_{\proj} S_p[F]\right)}.
$$
 \end{proof}

Two direct consequences of the previous theorem are stated in the next corollary:

\begin{corollary} \label{defB_p} Let $1 \leq p \leq \infty$ then:
\begin{enumerate}
\item The bilinear mapping associated to $\widetilde{Q}^p$,  $B^p: S_{p'}^w[E]\times S_p[F]\to E\widehat\otimes_{d_p^o} F$ given by
\begin{equation*}
B^p\left((x_{ij}), (y_{ij})\right)= \sum_{i,j} x_{ij} \otimes y_{ij}
\end{equation*} is jointly completely bounded with $\|B^p\|_{jcb}=1$.

\item For each $u \in E \widehat{\otimes}_{d_p^o} F$,
$$
\n{u}_{d_p^o} = \inf \big\{   \n{(x_{ij})}_{S_{p'}^w[E]} \n{(y_{ij})}_{S_{p}[F]} \; : \; u = \widetilde{Q}^p\left( (x_{ij})\otimes (y_{ij})\right)    \big\}.
$$
\end{enumerate}
\end{corollary}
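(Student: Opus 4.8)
The plan is to read both assertions off Theorem~\ref{Thm:Q-tilde} essentially for free, the only work being to translate the ``linear'' information about $\widetilde{Q}^p$ into the ``bilinear'' form of (1) and the ``norm-infimum'' form of (2). For part (1), I would invoke the universal property of the operator space projective tensor product, namely the complete isometry between jointly completely bounded bilinear maps $S_{p'}^w[E]\times S_p[F]\to E\widehat{\otimes}_{d_p^o}F$ and completely bounded linear maps $S_{p'}^w[E]\widehat{\otimes}_{\proj}S_p[F]\to E\widehat{\otimes}_{d_p^o}F$. Under this identification $B^p$ is exactly the bilinear map associated to $\widetilde{Q}^p$, so $\n{B^p}_{jcb}=\n{\widetilde{Q}^p}_{\cb}$. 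Since Theorem~\ref{Thm:Q-tilde} gives that $\widetilde{Q}^p$ is a complete $1$-quotient, it is completely contractive and, the target being nonzero, has $\cb$-norm exactly $1$; hence $\n{B^p}_{jcb}=1$. If one prefers to pin down the lower bound by hand rather than quote the exact $\cb$-norm of a metric surjection, I would instead use that $d_p^o$ is an operator space cross norm: evaluating $B^p$ at the pair $(e_{11}\otimes a,\,e_{11}\otimes b)$ with $\n{a}_E=\n{b}_F=1$ produces $a\otimes b$ with $\n{a\otimes b}_{d_p^o}=1$, which forces $\n{B^p}_{jcb}\ge 1$ already at the first matricial level.

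For part (2) I would prove the two inequalities separately. For ``$\le$'': given $\eta>0$, Theorem~\ref{thm:hard-Chevet-Saphar} furnishes a representation $u=\sum_{i,j}x_{ij}\otimes y_{ij}$ with $(x_{ij})\in S_{p'}\widehat{\otimes}_{\min}E$ and $\n{(x_{ij})}_{S_{p'}\widehat{\otimes}_{\min}E}\,\n{(y_{ij})}_{S_p[F]}<\n{u}_{d_p^o}+\eta$. Because the inclusion $S_{p'}\widehat{\otimes}_{\min}E\hookrightarrow S_{p'}^w[E]$ is a complete isometry (as used in the proof of Theorem~\ref{Thm:Q-tilde}), the same element viewed in $S_{p'}^w[E]$ keeps its norm, and $u=\widetilde{Q}^p\big((x_{ij})\otimes(y_{ij})\big)$ is then an admissible representation for the infimum in (2); letting $\eta\to 0$ shows that infimum is $\le\n{u}_{d_p^o}$. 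For ``$\ge$'': for any admissible representation $u=B^p\big((x_{ij}),(y_{ij})\big)$, the first-level ($n=m=1$) bilinear bound contained in $\n{B^p}_{jcb}=1$ gives $\n{u}_{d_p^o}\le\n{(x_{ij})}_{S_{p'}^w[E]}\,\n{(y_{ij})}_{S_p[F]}$, and taking the infimum over representations yields $\n{u}_{d_p^o}\le$ that infimum. Combining the two gives the claimed equality.

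I do not expect a genuine obstacle here, but the one point to watch is that surjectivity of $\widetilde{Q}^p$ by itself only guarantees a preimage of $u$ in $S_{p'}^w[E]\widehat{\otimes}_{\proj}S_p[F]$, which need not be an \emph{elementary} tensor, whereas (2) asks specifically for representations of the form $u=\widetilde{Q}^p\big((x_{ij})\otimes(y_{ij})\big)$. This is precisely what Theorem~\ref{thm:hard-Chevet-Saphar} resolves: it already provides elementary representations over the smaller space $S_{p'}\widehat{\otimes}_{\min}E$, and the completely isometric inclusion into $S_{p'}^w[E]$ transports them at no cost in norm. Thus the only real care needed is to align the two norm formulas (over $S_{p'}\widehat{\otimes}_{\min}E$ versus $S_{p'}^w[E]$) and to quote the universal property of $\widehat{\otimes}_{\proj}$ with the correct complete-isometry constant.
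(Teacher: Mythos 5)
Your proof is correct and follows essentially the same route as the paper: the paper states this corollary without a separate proof, presenting it as a direct consequence of Theorem~\ref{Thm:Q-tilde}, and your argument is precisely the intended fleshing-out of that remark. In particular, you use exactly the ingredients already at hand in the paper --- the universal property of $\widehat{\otimes}_{\proj}$ and the complete $1$-quotient property of $\widetilde{Q}^p$ for (1), and Theorem~\ref{thm:hard-Chevet-Saphar} together with the completely isometric inclusion $S_{p'}\widehat{\otimes}_{\min}E\hookrightarrow S_{p'}^w[E]$ (the same fact invoked in the proof of Theorem~\ref{Thm:Q-tilde}) to supply the elementary-tensor preimages needed for (2).
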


Now we show that certain two-sided multiplication operators are completely right $p$-nuclear.
This is closely related to the results in \cite{Oikhberg-10}, and the argument here follows closely \cite[Prop. 12.2.2]{Effros-Ruan-book}.
As in \cite[p. 21]{Pisier-Asterisque-98}, if $E$ is an operator space, $x \in M_\infty(E)$ and $a \in M_\infty$, we denote by $a \cdot x$ (resp. $x \cdot a$) the matrix product, that is,
$$
(a \cdot x)_{ij} = \sum_k a_{ik}x_{kj}, \qquad \bigg( \text{resp. } \quad (x\cdot a)_{ij} = \sum_k x_{ik}a_{kj}  \bigg).
$$
If $b \in M_\infty$, we denote $a \cdot x \cdot b = a \cdot (x \cdot b) = (a \cdot x) \cdot b$.

\begin{proposition}\label{proposition:multiplication-operators-are-nuclear}
Let $1 \leq p \leq \infty$ and  $a,b \in S_{2p}$. Then the multiplication operator
$
M(a,b) : x \mapsto a \cdot x \cdot b
$
belongs to $\mathcal{N}^p_o( S_{p'} , S_1 )$ and satisfies $\nu^p_o(M(a,b)) \le \n{a}_{S_{2p}} \n{b}_{S_{2p}}$.
\end{proposition}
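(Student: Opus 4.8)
The plan is to exhibit $M(a,b)$ as the image under $J^p$ of a concrete tensor in $S_p \widehat{\otimes}_{d_p^o} S_1$ --- note that here $E=S_{p'}$ and $F=S_1$, so that $E'=S_p$ --- and to estimate its $d_p^o$-norm through a single well-chosen factorization via $\widetilde{Q}^p$. First I would reduce to the diagonal case. Using singular value decompositions $a=U_a D_\alpha V_a^*$ and $b=U_b D_\beta V_b^*$, with $U_a,V_a,U_b,V_b$ unitaries and $\alpha,\beta\in\ell_{2p}$ the sequences of singular values, one has $M(a,b)=R\circ M(D_\alpha,D_\beta)\circ S$, where $S(x)=V_a^* x U_b$ on $S_{p'}$ and $R(y)=U_a y V_b^*$ on $S_1$ are complete isometries. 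Since $\n{R}_{\cb}=\n{S}_{\cb}=1$, the ideal property (Proposition \ref{proposition:ideal-property}) gives $\nu^p_o(M(a,b))\le \nu^p_o(M(D_\alpha,D_\beta))$, while $\n{a}_{S_{2p}}=\n{D_\alpha}_{S_{2p}}$ and $\n{b}_{S_{2p}}=\n{D_\beta}_{S_{2p}}$; hence it suffices to prove the estimate for $a=D_\alpha$ and $b=D_\beta$ diagonal.

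Then I would introduce the tensor $u=\sum_{i,j} e_{ij}\otimes (D_\alpha e_{ji} D_\beta)\in S_p\otimes S_1$. A direct matrix-unit computation, using that the trace duality gives $\langle e_{ij},x\rangle=x_{ji}$, shows $J^p(u)(x)=\sum_{i,j} x_{ji}\, D_\alpha e_{ji} D_\beta = D_\alpha x D_\beta$, that is, $J^p(u)=M(D_\alpha,D_\beta)$. Consequently $\nu^p_o(M(D_\alpha,D_\beta))\le \n{u}_{d_p^o}$, and by Corollary \ref{defB_p} it is enough to bound the product $\n{X}_{S_{p'}^w[S_p]}\cdot\n{Y}_{S_p[S_1]}$ for the factorization $u=\widetilde{Q}^p(X\otimes Y)$ determined by $X=(e_{ij})_{i,j}$ and $Y=(D_\alpha e_{ji} D_\beta)_{i,j}=(\alpha_j\beta_i\, e_{ji})_{i,j}$.

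For the first factor, Lemma \ref{lemma-Spweak-as-mappings} identifies $S_{p'}^w[S_p]=\CB(S_p,S_p)$, and under this identification $X=(e_{ij})_{i,j}$ is precisely the identity map of $S_p$, so $\n{X}_{S_{p'}^w[S_p]}=1$. For the second factor I would write $Y=(D_\beta\otimes\mathrm{id})\,Z\,(D_\alpha\otimes\mathrm{id})$, where $Z=\sum_{i,j} e_{ij}\otimes e_{ji}\in S_\infty[S_1]$; the same matrix-unit computation shows that $Z$, viewed through $S_\infty[S_1]=S_\infty\otimes_{\min}S_1$, corresponds to the identity of $S_1$, whence $\n{Z}_{S_\infty[S_1]}=1$. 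Pisier's factorization of the vector-valued Schatten spaces \cite{Pisier-Asterisque-98} --- the contractivity of the two-sided multiplication $S_{2p}\times S_\infty[S_1]\times S_{2p}\to S_p[S_1]$ --- then yields $\n{Y}_{S_p[S_1]}\le \n{D_\beta}_{S_{2p}}\,\n{Z}_{S_\infty[S_1]}\,\n{D_\alpha}_{S_{2p}}=\n{a}_{S_{2p}}\n{b}_{S_{2p}}$. Multiplying the two estimates gives $\nu^p_o(M(a,b))\le \n{a}_{S_{2p}}\n{b}_{S_{2p}}$.

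The work is entirely in the norm bookkeeping, and the step I expect to be most delicate is the identification of the second factor. The crucial point is to distinguish the two ``identity-like'' tensors: the flip $\sum_{i,j}e_{ij}\otimes e_{ji}$ has completely bounded norm $1$, whereas $\sum_{i,j}e_{ij}\otimes e_{ij}$ is the transpose and is completely unbounded. Choosing the representation so that the flip appears is exactly what pushes the diagonal weights onto the outer legs of $S_p[S_1]$, where the $S_{2p}$-factorization absorbs them with the right exponents via $\tfrac{1}{2p}+\tfrac{1}{2p}=\tfrac1p$; the balance $a,b\in S_{2p}$ in the hypothesis is precisely what this calculation demands. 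A secondary technical issue is convergence: the infinite flip $Z$ is the non-compact identity and is not literally an element of $S_\infty[S_1]$, so the factorization of $Y$ should be carried out on finite truncations (or first for finite-rank $a,b$) and then passed to the limit, which is legitimate because $D_\alpha,D_\beta\in S_{2p}$ are compact.
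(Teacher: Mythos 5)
Your strategy is in essence the paper's: after a reduction, write the multiplication operator as $\widetilde{Q}^p\big(X\otimes Y\big)$ where $X$ is a matrix-of-matrix-units element of norm one, estimate $Y$ by Pisier's two-sided multiplication $S_{2p}\cdot S_\infty[\,\cdot\,]\cdot S_{2p}\to S_p[\,\cdot\,]$, and handle infinite matrices by truncation plus a Cauchy-sequence argument (your SVD reduction to diagonal $a,b$ is harmless but superfluous: the paper runs the same computation for arbitrary finitely supported $a,b$). However, there is a genuine error in the bookkeeping, and it is exactly the flip-versus-transpose point you single out as crucial. Your two key identifications rest on the \emph{same} duality $(S_{p'})'\cong S_p$: the computation $J^p(u)=M(D_\alpha,D_\beta)$ for $u=\sum_{i,j}e_{ij}\otimes D_\alpha e_{ji}D_\beta$ requires the pairing $\langle e_{ij},x\rangle=x_{ji}$ (trace pairing), whereas the claim that $X=(e_{ij})_{i,j}$ is the identity under $S_{p'}^w[S_p]=\CB(S_p,S_p)$ requires the pairing $\langle e_{ij},x\rangle=x_{ij}$ (coordinate pairing), since the identification of Lemma \ref{lemma-Spweak-as-mappings} passes through $S_{p'}^N\otimes_{\min}S_p=\CB\big((S_{p'}^N)',S_p\big)$. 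These two requirements are contradictory, and by your own (correct) dichotomy the mismatch is fatal: whichever single convention is fixed, exactly one of the non-flip element $X=(e_{ij})_{i,j}$ and the flip $Z=\sum_{i,j}e_{ij}\otimes e_{ji}$ sitting inside $Y$ is the transpose, whose $n$-truncations have cb-norm $n$. Concretely: under the trace pairing your representation of $M(D_\alpha,D_\beta)$ is correct and $\n{Y}_{S_p[S_1]}\le\n{a}_{S_{2p}}\n{b}_{S_{2p}}$ holds, but then $X$ is the transpose map, its truncations have norm $n$, so $X\notin S_{p'}^w[S_p]$ and the left-factor bound collapses; under the coordinate pairing $X$ is indeed the identity, but then $u$ represents $x\mapsto D_\alpha x^{t}D_\beta$ rather than $M(D_\alpha,D_\beta)$, and moreover $Z$ becomes the transpose-type element of $S_\infty[S_1]$, so the estimate for $Y$ is actually false (for $p=\infty$ and $\alpha=\beta=(1,\dots,1,0,\dots)$ its $n$-truncation has norm $n$, not $1$). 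You cannot have both the flip (inside $Y$) and the non-flip (as $X$) acting as norm-one ``identities.''

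The repair is to use one pairing consistently, namely the one under which Lemma \ref{lemma-Spweak-as-mappings} as stated (and the identifications from Pisier used in the paper) are complete isometries: then the tensor representing $M(D_\alpha,D_\beta)$ is $u=\sum_{i,j}e_{ij}\otimes D_\alpha e_{ij}D_\beta$, with no flip anywhere; the left factor $(e_{ij})_{i,j}$ is the identity of $S_p$ (norm $1$), and the right factor is $D_\alpha\cdot W\cdot D_\beta$ with $W=(e_{ij})_{i,j}\in M_\infty(S_1)$ the compression-type element of norm $1$, to which the $S_{2p}\cdot S_\infty[S_1]\cdot S_{2p}$ estimate applies. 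At that point your argument coincides with the paper's proof, and the convergence issue you flag at the end is resolved exactly as the paper does it: prove the estimate for finitely supported (or truncated) $a,b$ and pass to the limit via the Cauchy-sequence argument in $\mathcal{N}^p_o(S_{p'},S_1)$ based on the identity $a^kxb^k-a^lxb^l=(a^k-a^l)xb^k+a^lx(b^k-b^l)$.
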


\begin{proof}
Write $a = (a_{ij})$ and  $b = (b_{ij})$.
Let us first observe that we may assume that only finitely many entries in each of these two infinite matrices are nonzero.
To that effect, suppose that we know the result for finitely supported matrices, and let $a, b \in S_{2p}$.
We can find sequences $(a^k), (b^k)$ in $S_{2p}$ of finitely supported matrices converging to $a$ and $b$, respectively, in the $S_{2p}$ norm.
Since for $x \in S_{p'}$ we have
$$
a^kxb^k - a^lxb^l = (a^k-a^l)xb^k + a^lx(b^k-b^l),
$$
it follows that $\big( M(a^k,b^k) \big)_k$ is a Cauchy sequence in $\mathcal{N}^p_o( S_{p'} , S_1)$ and therefore converges in the same space.
Since the norm in $\CB( S_{p'} , S_1 )$ is dominated by that of $\mathcal{N}^p_o( S_{p'} , S_1)$, it follows that the limit of $\big( M(a^k,b^k) \big)_k$ has to be $M(a,b)$, and therefore $M(a,b) \in \mathcal{N}^p_o( S_{p'} , S_1)$ with  $\nu^p_o(M(a,b)) \le \n{a}_{S_{2p}} \n{b}_{S_{2p}}$.

Therefore, from now on we assume $a=(a_{ij})_{i,j=1}^n$ and $b=(b_{ij})_{i,j=1}^n$.
Let $\varepsilon = [\varepsilon_{ij}]_{i,j=1}^n$ be the matrix of matrix units.
For $x = (x_{ij}) \in S_{p'}$,
\begin{align*}
(a  x  b)_{ij} &= \sum_{k,l} \varepsilon_{ij} \otimes a_{ik} x_{kl} b_{lj} \\
&= \sum_{k,l}  \varepsilon_{ij} \otimes a_{ik} \varepsilon_{kl}(x) b_{lj}.
\end{align*}
From the above calculation
$$
M(a,b)(x) = \widetilde{Q}^p\big( \varepsilon \otimes (a \cdot \varepsilon \cdot b) \big) (x),
$$
hence $M(a,b) = \widetilde{Q}^p\big( \varepsilon \otimes (a \cdot \varepsilon \cdot b) \big)$ and
$$
\nu^p_o\big( M(a,b) \big) \le \n{\varepsilon}_{S_{p'}^w[S_p]} \n{ a \cdot \varepsilon \cdot b }_{S_p[S_1]}.
$$
Note that
$$
{\varepsilon = [\varepsilon_{ij}]_{i,j=1}^n \in S_{p'}^w[S_p] } = \CB(S_{p},S_{p})
$$
is just the projection onto an initial block $S_p^n$, and thus $\n{\varepsilon}_{S_{p'}^w[S_p]} = 1$.

On the other hand, by \cite[Theorem 1.5 and Lemma 1.6]{Pisier-Asterisque-98},
$$
\n{a \cdot \varepsilon \cdot b}_{S^n_p[S_1]} \le \n{a}_{S^n_{2p}} \n{\varepsilon}_{M_n(S_1)} \n{b}_{S^n_{2p}}.
$$
Now,
$$
\varepsilon =  [\varepsilon_{ij}]_{i,j=1}^n \in M_n(S_1) = M_n(S_\infty') \subseteq \CB(S_\infty, M_n)
$$
is once again just a projection onto an initial block, so $\n{\varepsilon}_{M_n(S_1)} = 1$.
Therefore, we conclude that $\nu^p_o\big( M(a,b) \big) \le  \n{a}_{S_{2p}} \n{b}_{S_{2p}}$.
\end{proof}

The multiplication operators defined in the previous proposition are the canonical prototypes of completely right $p$-nuclear mappings: we show below that a mapping is completely right $p$-nuclear if and only if it admits a factorization through one such multiplication operator (similar to the Banach space framework as in \eqref{right p-nuclear}).

\begin{theorem} \label{right p nuclear factorization}
For a linear map $T : E \to F$ and $1\le p\le\infty$, the following are equivalent:
\begin{enumerate}
\item[(a)] $T$ is completely right $p$-nuclear.
\item[(b)] There exist $a,b \in S_{2p}$ such that $T$  admits a factorization
$$
\xymatrix{
E \ar[r]^T \ar[d]_{U} &F \\
S_{p'} \ar[r]_{M(a,b)} &S_1 \ar[u]_{V}
}
$$
\end{enumerate}
Moreover, in this case
$$
\nu^p_o(T) = \inf \big\{ \n{U}_{\cb} \n{V}_{\cb}  \n{a}_{S_{2p}} \n{b}_{S_{2p}}  \big\}
$$
where the infimum is taken over all factorizations as in (b).
\end{theorem}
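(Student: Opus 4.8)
The plan is to prove the two implications separately, keeping careful track of the norms so that the resulting inequalities combine into the asserted equality.

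\textbf{The easy direction (b) $\Rightarrow$ (a).} Suppose $T = V\circ M(a,b)\circ U$ as in (b). By Proposition~\ref{proposition:multiplication-operators-are-nuclear}, $M(a,b)\in\mathcal N^p_o(S_{p'},S_1)$ with $\nu^p_o(M(a,b))\le\|a\|_{S_{2p}}\|b\|_{S_{2p}}$. Since $\mathcal N^p_o$ is a mapping ideal (Proposition~\ref{proposition:ideal-property}), pre- and post-composing with the completely bounded maps $U$ and $V$ keeps $T$ in the ideal and gives, at the level $n=1$ of the ideal property, $\nu^p_o(T)\le\|V\|_{\cb}\,\nu^p_o(M(a,b))\,\|U\|_{\cb}\le\|U\|_{\cb}\|V\|_{\cb}\|a\|_{S_{2p}}\|b\|_{S_{2p}}$. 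Taking the infimum over all factorizations yields $\nu^p_o(T)\le\inf\{\cdots\}$.

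\textbf{The hard direction (a) $\Rightarrow$ (b).} Assume $T$ is completely right $p$-nuclear, so $T=J^p(u)$ with $u\in E'\widehat{\otimes}_{d_p^o}F$; fix $\eta>0$ and choose $u$ with $\|u\|_{d_p^o}\le\nu^p_o(T)+\eta$. By Corollary~\ref{defB_p}(2) I can write $u=\widetilde Q^p\big((x_{ij})\otimes(y_{ij})\big)$ with $(x_{ij})\in S_{p'}^w[E']$, $(y_{ij})\in S_p[F]$ and $\|(x_{ij})\|_{S_{p'}^w[E']}\,\|(y_{ij})\|_{S_p[F]}\le\nu^p_o(T)+2\eta$. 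I then invoke the factorization of the vector-valued Schatten space from \cite{Pisier-Asterisque-98}: there exist $a,b\in S_{2p}$ and $\zeta=(\zeta_{kl})\in S_\infty[F]$ with $(y_{ij})=a\cdot\zeta\cdot b$, $\|\zeta\|_{S_\infty[F]}\le 1$ and $\|a\|_{S_{2p}}\|b\|_{S_{2p}}\le(1+\eta)\|(y_{ij})\|_{S_p[F]}$. Now I build the three maps. Using Lemma~\ref{lemma-Spweak-as-mappings}, $(x_{ij})$ is the same datum as a map $\Phi\in\CB(S_p,E')$ with $\Phi(e_{ij})=x_{ij}$; composing its adjoint with the canonical embedding $E\hookrightarrow E''$ produces $U\colon E\to S_p'=S_{p'}$ with $(U(e))_{ij}=x_{ij}(e)$ and $\|U\|_{\cb}\le\|(x_{ij})\|_{S_{p'}^w[E']}$. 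Similarly $\zeta\in S_\infty[F]\subseteq S_\infty^w[F]=\CB(S_1,F)$ defines $V\colon S_1\to F$ with $V(e_{kl})=\zeta_{kl}$ and $\|V\|_{\cb}\le\|\zeta\|_{S_\infty[F]}\le 1$. Taking the multiplication operator $M(a^t,b^t)\colon S_{p'}\to S_1$ (transposes only to match indices; the Schatten norms are unchanged), a direct index computation using $y_{ij}=\sum_{kl}a_{ik}\zeta_{kl}b_{lj}$ gives $V\big(a^t\,U(e)\,b^t\big)=\sum_{ij}x_{ij}(e)\,y_{ij}=T(e)$, so $T=V\circ M(a^t,b^t)\circ U$. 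The bound $\|U\|_{\cb}\|V\|_{\cb}\|a^t\|_{S_{2p}}\|b^t\|_{S_{2p}}\le(1+\eta)(\nu^p_o(T)+2\eta)$ then yields $\inf\{\cdots\}\le\nu^p_o(T)$ as $\eta\to 0$, and combined with the first part this proves the equality.

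\textbf{The main obstacle.} The crux is the middle step: realizing $e\mapsto(x_{ij}(e))_{ij}$ as a \emph{completely} bounded map into the correct target $S_{p'}$ with the right $\cb$-norm. For fixed $e$ this is the functional $c\mapsto\langle\Phi(c),e\rangle$ on $S_p$, so it lives in $S_p'$; completely boundedness into $S_p'$ with norm $\le\|\Phi\|_{\cb}$ rests on the operator-space identification $S_{p'}^w[E']=\CB(S_p,E')$ together with the self-duality $S_p'=S_{p'}$, while the decomposition $(y_{ij})=a\cdot\zeta\cdot b$ rests on the factorization $S_p[F]=S_{2p}\cdot S_\infty[F]\cdot S_{2p}$; both are genuine operator-space inputs rather than formal manipulations. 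The endpoint cases require separate care, since $S_1'=B(\ell_2)$ is strictly larger than $S_\infty$ (so for $p=1$ the adjoint a priori lands in $B(\ell_2)$ rather than $S_{p'}=S_\infty$) and for $p=\infty$ the factorization of $S_p[F]$ degenerates; the cleanest remedy is to first reduce to finitely supported $(x_{ij})$ and $\zeta$, as in the proof of Proposition~\ref{proposition:multiplication-operators-are-nuclear}, so that every matrix involved genuinely belongs to $S_{p'}$ and $S_\infty[F]$, and then pass to the limit.
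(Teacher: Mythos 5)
Your (b) $\Rightarrow$ (a) direction and the overall architecture of (a) $\Rightarrow$ (b) --- Pisier's factorization $Y=a\cdot\zeta\cdot b$ from \cite[Thm. 1.5]{Pisier-Asterisque-98}, the embedding $S_\infty[F]\hookrightarrow\CB(S_1,F)$ producing $V$, and the index computation --- coincide with the paper's proof, and your argument is sound for $1<p\le\infty$ (at $p=\infty$ the paper actually sidesteps Pisier's factorization altogether, reading the diagram off the definition of $d_\infty^o$ with the identity as middle map). The genuine problem is the endpoint $p=1$, which the statement includes, and your proposed remedy for it fails. The trouble originates in your first step: you take the representation of $u$ from Corollary \ref{defB_p}(2), which only places the left factor $(x_{ij})$ in the weak space $S_{p'}^w[E']=\CB(S_p,E')$, and you then need the adjoint to land in $S_{p'}$; this uses $S_p'=S_{p'}$, which fails exactly at $p=1$ since $S_1'=B(\ell_2)\supsetneq S_\infty$, as you note. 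Your fix --- ``reduce to finitely supported $(x_{ij})$ as in Proposition \ref{proposition:multiplication-operators-are-nuclear}, then pass to the limit'' --- breaks down for two reasons. First, the reduction in that proposition works because finitely supported matrices are dense in $S_{2p}$; they are \emph{not} dense in $S_{p'}^w[E']$ (for $p=1$ and $E'=\C$ this is precisely the failure of density of finite matrices in $B(\ell_2)$), so you cannot approximate your given $(x_{ij})$ in any norm your estimates control. Second, even if you instead truncate the tensor $u$ itself via the $y$-side (where truncations do converge, by \cite[Lemma 1.12]{Pisier-Asterisque-98}), you only get factorizations of approximants $T_N\to T$ in $\nu^p_o$; a limit of factorizations is not automatically a factorization of the limit, so you would still have to glue the finite pieces $U_N,a_N,b_N,V_N$ into a single diagram, and carrying that out is essentially a re-proof of Theorem \ref{thm:hard-Chevet-Saphar}.

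The clean repair is exactly what the paper does: instead of Corollary \ref{defB_p}(2), invoke Theorem \ref{thm:hard-Chevet-Saphar}, which produces a representation $u=Q^p(X\otimes Y)$ with $X$ in the \emph{minimal} tensor product $S_{p'}\widehat{\otimes}_{\min}E'$ rather than merely in $S_{p'}^w[E']$. Then the symmetry of the minimal tensor product, $S_{p'}\widehat{\otimes}_{\min}E'=E'\widehat{\otimes}_{\min}S_{p'}\subseteq\CB(E,S_{p'})$, yields the map $U:E\to S_{p'}$ with $\n{U}_{\cb}\le\n{X}_{S_{p'}\widehat{\otimes}_{\min}E'}$ directly, for every $1\le p\le\infty$: no adjoints, no appeal to $S_p'=S_{p'}$, hence no endpoint pathology. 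With that substitution, the rest of your argument (the factorization of $Y$, the map $V$ from $\zeta$, the identity $T=V\circ M(a,b)\circ U$, and the $\eta$-bookkeeping giving the norm equality) goes through verbatim. In short, the detour through $S_{p'}^w[E']$ and duality is what manufactures the $p=1$ gap, and Theorem \ref{thm:hard-Chevet-Saphar} --- whose role your proposal underuses --- is precisely the tool designed to remove it.
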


\begin{proof}
(b) $\Rightarrow$ (a): This follows from Propositions \ref{proposition:ideal-property} and \ref{proposition:multiplication-operators-are-nuclear}.
\\
\\
(a) $\Rightarrow$ (b):  Let $1\le p<\infty$, assume that $T  \in \mathcal{N}^p_o(E;F)$ with $\nu^p_o(T) < 1$.
Then there exists $u \in E' \widehat{\otimes}_{d_p^o} F$ such that $J^p(u) = T$ and $\n{u}_{d_p^o} < 1$.
By Theorem \ref{thm:hard-Chevet-Saphar}, we can in turn find
$X \in   S_{p'}\widehat{\otimes}_{\min} E'$, $Y \in S_p[F]$
such that $u = Q^p( X\otimes Y )$, $\n{X}_{S_{p'}\widehat{\otimes}_{\min} E'}<1$ and $\n{Y}_{S_{p}[F]}<1$.
On the one hand, the representation of the minimal tensor product and its symmetry allows us to think of $X$ as a  mapping $U$ in $\mathcal{CB}(E, S_{p'})$  with $\n{U}_{\cb}<1 $.
On the other hand, by \cite[Thm. 1.5]{Pisier-Asterisque-98}, we can write $Y = a \cdot \bar{Y} \cdot b$ with $\n{a}_{S_{2p}}<1$, $\n{b}_{S_{2p}}<1$ and $\n{\bar{Y}}_{S_\infty[F]} < 1$.
Also, since $S_\infty[F] = S_\infty \widehat{\otimes}_{\min} F$ completely isometrically embeds into $\CB(S_1,F)$,
$\bar{Y}$ canonically induces a linear map $V : S_1 \to F$ with $\n{V}_{\cb} = \n{\bar{Y}}_{S_\infty[F]}$.
Chasing down the formulas it is easy to see that $T = V \circ  M(a,b) \circ U$ (in the spirit of \cite[Prop. 12.2.3]{Effros-Ruan-book}), giving us the desired factorization.
Finally, for $p=\infty$ this is derived directly from the definition of right $\infty$-nuclear with $M(a,b)=Id$.
\end{proof}

\section{operator $p$-compact mappings} \label{section completely compact mappings}

Following \cite{sinha2002compact,delgado2010operators} we say that a linear mapping between Banach spaces $T: \textbf X \to  \textbf Y$ is $p$-compact if $T(B_{\textbf X})$ is a relatively $p$-compact set. That is, there exists a sequence $(y_n)_n \in \textbf Y$  such that
\begin{equation}\label{T bola p-compacto}
T(B_{\textbf X})\subset \left\{\sum_{n=1}^{\infty} \alpha_n y_n \colon \sum_{n =1}^\infty \vert \alpha_n \vert^{p'} \leq 1\right\} \mbox{ and } \sum_{n=1}^{\infty} \Vert y_n \Vert^p < \infty.
\end{equation}

The class of $p$-compact mappings is denoted by $\mathcal K_p$, and endowed with the norm
\begin{equation}\label{norma kp Banach}
\kappa_p(T):=\inf\{\Vert (y_n)_n \Vert_{\ell_p(\textbf Y)}\},
\end{equation}
where the infimum runs all over the sequences $(y_n)_n \in \textbf Y$ as in Equation \eqref{T bola p-compacto}.

This notion should not be confused with the homonymous concept studied in the late seventies and early eighties by \cite{Pietsch-Operator-Ideals} and \cite{fourie1979banach} (see also \cite{Defant-Floret}).
Nowadays, this older class is sometimes referred to as ``classical $p$-compact'' mappings (as proposed in \cite{oja2012remark}).
We will not deal with such mappings in this paper.

The following characterization of $p$-compactness in terms of commutative diagrams is well-known to experts, but we were unable to find an explicit reference for it. Thus, we include a sketch of its proof for completeness.

\begin{proposition}
Let $\textbf X$ and $\textbf Y$ be Banach spaces, and $T: \textbf X \to \textbf Y$ a linear mapping. The following are equivalent:
\begin{enumerate}
\item $T: \textbf X \to \textbf Y$ is $p$-compact
\item There exist a Banach space $\textbf Z$, a mapping $\Theta \in \mathcal N^p(\textbf Z; \textbf Y)$ and a bounded mapping $R \in \mathcal{L}(\textbf X,\textbf Z/\ker\Theta)$ with $\Vert R \Vert \leq 1$ such that the following diagram commutes

\begin{equation}\label{p-compact factor}
\xymatrix{
\textbf X \ar[r]^T  \ar[rd]_R    & \textbf Y  & \ar[l]_{\Theta } \ar@{->>}[ld]^{\pi}  \textbf Z \\
 &   \textbf Z/\ker\Theta  \ar[u]^{\widetilde{\Theta}},  &
}
\end{equation}

\end{enumerate}
Moreover, $$\kappa_p(T) = \inf\{ \nu^p(\Theta): \Theta \in \mathcal N^p(\textbf Z; \textbf Y) \text{ as in } \eqref{p-compact factor}\}.$$

\begin{proof}
$(1) \Rightarrow (2)$: This is essentially contained in the proof of \cite[Proposition 2.9.]{galicer2011ideal} (which is based on \cite[Theorem 3.2.]{sinha2002compact}).
A careful look shows that it also holds $$\inf\{ \nu^p(\Theta): \Theta \in \mathcal N^p(\textbf Z; \textbf Y) \text{ as in } \eqref{p-compact factor}\} \leq \kappa_p(T).$$

$(2) \Rightarrow (1)$: Given $\varepsilon >0$ we have the following representation for $\Theta$:

$$ \Theta = \sum_{n\in \mathbb N} x_n' \otimes y_n,$$
where $(x_n)_{n \in \mathbb N} \in \ell_{p'}^w(\textbf Z')$, $(y_n)_{n \in \mathbb N} \in \ell_{p}(\textbf Y)$ such that $\Vert (x_n) \Vert_{\ell_{p'}^w(\textbf Z')} \cdot \Vert (y_n) \Vert_{\ell_{p}(\textbf Y)} \leq (1+\varepsilon) \nu^p(\Theta)$.

Let us show that $T= \tilde \Theta R$ is $p$-compact. Indeed, if $x \in B_{\textbf X}$ then $\Vert Rx \Vert_{\textbf Z/\ker\Theta} \leq 1$. Thus, there is $z \in \textbf Z$ such that $\pi z = Rx$, $\Vert z \Vert_{\textbf Z} \leq 1 + \varepsilon$. Then, $Tx=\tilde \Theta Rx= \tilde \Theta \pi z = \Theta z = \sum_{n \in \mathbb N} x_n'(z)y_n$. Since $$\Vert(x_n'(z))\Vert_{\ell_{p'}} \leq  \Vert (x_n) \Vert_{\ell_{p'}^w(\textbf Z')}  \Vert z \Vert_{\textbf Z} \leq (1 + \varepsilon) \Vert (x_n) \Vert_{\ell_{p'}^w(\textbf Z')},$$
this shows that $Tx$ lies in the $p$-convex hull of the sequence $(y_n \cdot (1+\varepsilon) \Vert (x_n) \Vert_{\ell_{p'}^w(\textbf Z')})_{n \in\mathbb N}$. This implies that $T$ is $p$-compact and moreover, $\kappa(T) \leq (1+ \varepsilon)^2 \nu^p(\Theta)$.
\end{proof}

\end{proposition}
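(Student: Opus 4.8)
The plan is to establish the two implications separately, tracking the relevant norms carefully so that the estimates combine at the end to give both the equivalence and the ``moreover'' formula $\kappa_p(T) = \inf\{\nu^p(\Theta)\}$.

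For $(2) \Rightarrow (1)$ I would argue directly from a right $p$-nuclear representation. Fix $\eps > 0$ and choose $\Theta = \sum_n x_n' \otimes y_n$ with $(x_n') \in \ell_{p'}^w(\textbf Z')$, $(y_n) \in \ell_p(\textbf Y)$ and $\|(x_n')\|_{\ell_{p'}^w(\textbf Z')}\,\|(y_n)\|_{\ell_p(\textbf Y)} \le (1+\eps)\,\nu^p(\Theta)$. For $x \in B_{\textbf X}$ we have $\|Rx\|_{\textbf Z/\ker\Theta} \le 1$, so $Rx$ lifts to some $z \in \textbf Z$ with $\pi z = Rx$ and $\|z\| \le 1+\eps$. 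Then $Tx = \widetilde\Theta Rx = \widetilde\Theta\pi z = \Theta z = \sum_n x_n'(z)\,y_n$, where the scalars obey $\|(x_n'(z))\|_{\ell_{p'}} \le \|(x_n')\|_{\ell_{p'}^w(\textbf Z')}\,\|z\|$. A single rescaling of $(y_n)$ thus places every $Tx$ in one $p$-convex hull, so $T$ is $p$-compact with $\kappa_p(T) \le (1+\eps)^2\,\nu^p(\Theta)$; letting $\eps \to 0$ and taking the infimum over factorizations yields $\kappa_p(T) \le \inf\{\nu^p(\Theta)\}$.

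For $(1) \Rightarrow (2)$ I would make the Sinha--Karn factorization explicit. Given $\eps > 0$, pick $(y_n) \in \ell_p(\textbf Y)$ witnessing the $p$-compactness of $T$ with $\|(y_n)\|_{\ell_p(\textbf Y)} \le (1+\eps)\,\kappa_p(T)$, so $T(B_{\textbf X})$ lies in the $p$-convex hull of $(y_n)$. Take $\textbf Z = \ell_{p'}$ and define $\Theta : \ell_{p'} \to \textbf Y$ by $\Theta(\xi) = \sum_n \xi_n y_n$, which converges by H\"older. Writing $\Theta = \sum_n e_n' \otimes y_n$ with $e_n'$ the coordinate functionals on $\ell_{p'}$, a direct computation gives $\|(e_n')\|_{\ell_{p'}^w((\ell_{p'})')} = 1$, so $\Theta \in \mathcal N^p(\ell_{p'};\textbf Y)$ with $\nu^p(\Theta) \le \|(y_n)\|_{\ell_p(\textbf Y)}$. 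Since $\widetilde\Theta : \ell_{p'}/\ker\Theta \to \textbf Y$ is injective and $T(\textbf X)$ is contained in the range of $\Theta$ (equivalently of $\widetilde\Theta$), I set $R := \widetilde\Theta^{\,-1}\circ T$; this is linear and satisfies $T = \widetilde\Theta R$ by construction. For $x \in B_{\textbf X}$, writing $Tx = \sum_n \alpha_n y_n$ with $\|(\alpha_n)\|_{\ell_{p'}} \le 1$ identifies $Rx = \pi((\alpha_n))$, whence $\|Rx\|_{\ell_{p'}/\ker\Theta} \le 1$ and so $\|R\| \le 1$. This produces a factorization with $\nu^p(\Theta) \le (1+\eps)\,\kappa_p(T)$, giving $\inf\{\nu^p(\Theta)\} \le \kappa_p(T)$ and hence the desired equality.

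The step I expect to be the main obstacle is the construction and control of $R$ in $(1)\Rightarrow(2)$: one must verify that $\widetilde\Theta^{\,-1}\circ T$ is genuinely well defined---which is precisely where $p$-compactness is used, since it guarantees $T(\textbf X)$ lands in the range of $\Theta$---and that passing to the quotient $\ell_{p'}/\ker\Theta$ is exactly what converts the crude bound $\|(\alpha_n)\|_{\ell_{p'}} \le 1$ into the norm estimate $\|R\|\le 1$. A secondary technical point is the limiting cases $p=1$ and $p=\infty$, where $\ell_{p'}$ and the weak $\ell_{p'}$ norm degenerate; the same scheme applies with $\ell_{p'}$ replaced by the appropriate sequence space and the coordinate-functional computation adjusted accordingly.
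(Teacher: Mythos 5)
Your proposal is correct and takes essentially the same route as the paper: your $(2)\Rightarrow(1)$ argument (choosing a representation of $\Theta$ within $(1+\varepsilon)$ of $\nu^p(\Theta)$ and lifting $Rx$ through the quotient with another $(1+\varepsilon)$ loss) is the paper's proof almost verbatim, while your explicit factorization through $\ell_{p'}$ with $\Theta(\xi)=\sum_n \xi_n y_n$, $\nu^p(\Theta)\le\|(y_n)\|_{\ell_p(\mathbf{Y})}$ and $R=\widetilde{\Theta}^{-1}\circ T$ is precisely the Sinha--Karn construction that the paper outsources to \cite[Prop. 2.9]{galicer2011ideal} and \cite[Thm. 3.2]{sinha2002compact}. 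The only difference is that your write-up of $(1)\Rightarrow(2)$ is self-contained, and the points you flag (well-definedness of $\widetilde{\Theta}^{-1}\circ T$, which indeed follows from $T(\mathbf{X})\subseteq\Theta(\ell_{p'})$ by scaling, and the degenerate cases $p\in\{1,\infty\}$) are handled correctly.
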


We now look for a definition of $p$-compact mappings in the noncommutative framework. We choose to define it as in Equation \eqref{p-compact factor} since this will allow us to provide the ideal with an operator space structure.
Later on, we will see an equivalent description which is more obviously similar to the original Banach space definition (see Theorem \ref{thm:characterization-p-compact-mapping-via-p-compact-sets}).

\begin{definition}
Let $E$ and $F$ be operator spaces. A mapping $T \in \mathcal{CB}(E,F)$ is called \emph{operator $p$-compact} if there exist an operator space $G$, a completely right $p$-nuclear mapping $\Theta \in \mathcal{N}^p_o(G,F)$ and a completely bounded mapping $R \in \mathcal{CB}(E,G/\ker\Theta)$ with $\Vert R \Vert_{cb} \leq 1$ such that the following diagram commutes

\begin{equation}\label{p-compact o.s.}
\xymatrix{
E \ar[r]^T  \ar[rd]_R    & F  & \ar[l]_{\Theta } \ar@{->>}[ld]^{\pi}  G \\
 &   G/\ker\Theta  \ar[u]^{\widetilde{\Theta}},  &
}
\end{equation}

where $\pi$ stands for the natural 1-quotient mapping and $\widetilde{\Theta}$ is given by $\widetilde{\Theta}(\pi(g)) = \Theta(g)$.

\end{definition}

The set of all operator $p$-compact mappings from $E$ to $F$ is denoted by $\mathcal{K}_p^o(E,F)$. For $T \in  \mathcal{K}_p^o(E,F)$, we also define
$$\kappa_p^o(T):= \inf \{ \nu^p_o(\Theta) \},$$ where the infimum runs over all possible completely right $p$-nuclear mappings $\Theta \in \mathcal{N}^p_o(G,F)$  as in  \eqref{p-compact o.s.}.

\begin{proposition}
Let $E$ and $F$ be operator spaces. The set  $\mathcal{K}_p^o(E,F)$ is a linear subspace of $\mathcal{CB}(E,F)$ and $\kappa_p^o$ defines a norm for this space.

\end{proposition}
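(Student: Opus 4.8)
The plan is to verify directly the axioms that make $\kappa^o_p$ a norm---positive homogeneity, the triangle inequality (which simultaneously yields closure under addition), and definiteness---checking stability under scalar multiplication along the way. Homogeneity is immediate: if $T = \widetilde{\Theta}\circ R$ as in \eqref{p-compact o.s.} and $\lambda\neq 0$, then $\ker(\lambda\Theta)=\ker\Theta$, so $\lambda T = \widetilde{\lambda\Theta}\circ R$ is again a valid factorization with the \emph{same} $R$; since $\nu^p_o$ is a norm on $\mathcal{N}^p_o$ we have $\nu^p_o(\lambda\Theta)=|\lambda|\,\nu^p_o(\Theta)$, and taking infima over factorizations gives $\kappa^o_p(\lambda T)=|\lambda|\,\kappa^o_p(T)$. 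The case $\lambda=0$ is covered by observing that the zero map factors trivially (take $\Theta=0$), so $0\in\mathcal{K}^o_p(E,F)$ and $\kappa^o_p(0)=0$.

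The central step is the triangle inequality. Fix $T_1,T_2\in\mathcal{K}^o_p(E,F)$ and $\eps>0$, and choose factorizations $T_i=\widetilde{\Theta_i}\circ R_i$ with $\Theta_i\in\mathcal{N}^p_o(G_i,F)$, $\Vert R_i\Vert_{\cb}\le 1$ and $\nu^p_o(\Theta_i)\le\kappa^o_p(T_i)+\eps$. I would take $G=G_1\oplus_\infty G_2$ (the $\ell_\infty$-, i.e.\ $C^*$-, direct sum) with its completely contractive coordinate projections $P_i\colon G\to G_i$, and set $\Theta=\Theta_1P_1+\Theta_2P_2\colon G\to F$. By the ideal property of $\mathcal{N}^p_o$ (Proposition~\ref{proposition:ideal-property}) each $\Theta_iP_i$ lies in $\mathcal{N}^p_o(G,F)$ with $\nu^p_o(\Theta_iP_i)\le\nu^p_o(\Theta_i)$, and since $\nu^p_o$ is a quotient norm we get $\Theta\in\mathcal{N}^p_o(G,F)$ with $\nu^p_o(\Theta)\le\nu^p_o(\Theta_1)+\nu^p_o(\Theta_2)$.

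The main obstacle is producing $R\colon E\to G/\ker\Theta$ with $\Vert R\Vert_{\cb}\le 1$ and $\widetilde{\Theta}\circ R=T_1+T_2$, precisely because the constraint $\Vert R\Vert_{\cb}\le1$ is rigid: one cannot rescale $R$, so the max-norm ($\oplus_\infty$) direct sum is the forced choice for keeping the glued map completely contractive. I would use that $\ker\Theta_1\oplus\ker\Theta_2\subseteq\ker\Theta$ together with the canonical complete isometry $G/(\ker\Theta_1\oplus\ker\Theta_2)=(G_1/\ker\Theta_1)\oplus_\infty(G_2/\ker\Theta_2)$ (quotients commute with $\ell_\infty$-direct sums). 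The induced map $\psi\colon(G_1/\ker\Theta_1)\oplus_\infty(G_2/\ker\Theta_2)\to G/\ker\Theta$ coming from the larger subspace is a complete quotient map, so $\Vert\psi\Vert_{\cb}\le 1$. Setting $R=\psi\circ(R_1\oplus R_2)$, where $(R_1\oplus R_2)(x)=(R_1x,R_2x)$, the $\ell_\infty$-structure of the target gives $\Vert R_1\oplus R_2\Vert_{\cb}=\max(\Vert R_1\Vert_{\cb},\Vert R_2\Vert_{\cb})\le 1$, hence $\Vert R\Vert_{\cb}\le 1$. A short diagram chase---writing $R_ix=\pi_i(g_i)$ and using $\widetilde{\Theta}(\pi(g_1,g_2))=\Theta_1g_1+\Theta_2g_2$---shows $\widetilde{\Theta}\circ R=T_1+T_2$. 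Thus $T_1+T_2\in\mathcal{K}^o_p(E,F)$ with $\kappa^o_p(T_1+T_2)\le\nu^p_o(\Theta)\le\kappa^o_p(T_1)+\kappa^o_p(T_2)+2\eps$; letting $\eps\to 0$ establishes both closure under addition and the triangle inequality.

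For definiteness I would invoke property (i) of the mapping ideal $\mathcal{N}^p_o$: the inclusion $\mathcal{N}^p_o\hookrightarrow\CB$ is a complete contraction, so in any factorization $\Vert T\Vert_{\cb}\le\Vert\widetilde{\Theta}\Vert_{\cb}\,\Vert R\Vert_{\cb}\le\Vert\Theta\Vert_{\cb}\le\nu^p_o(\Theta)$, using that $\pi$ is a complete $1$-quotient so that $\Vert\widetilde{\Theta}\Vert_{\cb}=\Vert\Theta\Vert_{\cb}$. Taking the infimum over factorizations yields $\Vert T\Vert_{\cb}\le\kappa^o_p(T)$, whence $\kappa^o_p(T)=0$ forces $T=0$. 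The decisive technical inputs are therefore the compatibility of $\ell_\infty$-direct sums with quotients and the use of the max-norm direct sum to respect the $\Vert R\Vert_{\cb}\le 1$ normalization.
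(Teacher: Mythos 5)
Your proposal is correct and follows essentially the same route as the paper: the $\oplus_\infty$ direct sum $G = G_1 \oplus_\infty G_2$ with $\Theta = \Theta_1 P_1 + \Theta_2 P_2$, the glued map into $G/\ker\Theta$ (your $\psi \circ (R_1 \oplus R_2)$ is exactly the paper's $\Lambda(R_1 x, R_2 x)$), and definiteness via $\Vert T \Vert_{\cb} \le \kappa^o_p(T)$. Your justification of the complete contractivity of the gluing map through the identification $G/(\ker\Theta_1 \oplus \ker\Theta_2) \cong (G_1/\ker\Theta_1) \oplus_\infty (G_2/\ker\Theta_2)$ is a valid fleshing-out of a step the paper simply declares to be clear.
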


\begin{proof}
It is clear that  $\kappa_p^o(T)\ge 0$, for all $T\in \mathcal{K}_p^o(E,F)$. Also, $\kappa_p^o(T)= 0$ implies, for each $\varepsilon>0$, the existence of a commutative diagram as \eqref{p-compact o.s.}  with  $\nu^p_o(\Theta) <\varepsilon$. Then, $\|\Theta\|_{cb}<\varepsilon$ and so $\|\widetilde\Theta\|_{cb}<\varepsilon$ from which it is derived that $T=0$.

If $T\in \mathcal{K}_p^o(E,F)$ and $\lambda$ is a scalar, it is easy to see that $\lambda T\in \mathcal{K}_p^o(E,F)$ with $\kappa_p^o(\lambda T)=|\lambda| \kappa_p^o(T)$.

Now, let us consider  $T_1, T_2 \in \mathcal{K}^o_p(E,F)$. For each $i=1,2$ we have a commutative diagram:

\begin{equation*}
\xymatrix{
E \ar[r]^{T_i}  \ar[rd]_{R_i}    & F  & \ar[l]_{\Theta_i } \ar@{->>}[ld]^{\pi_i}  G_i \\
 &   G_i/\ker\Theta_i  \ar[u]^{\widetilde{\Theta_i}}  &
}
\end{equation*}
where $G_i$ is an operator space, $R_i\in \mathcal{CB}(E, G_i)$ with $\|R_i\|_{cb}\le 1$ and $\Theta_i \in \mathcal{N}^p_o(G_i,F)$. Define $G=G_1\oplus_\infty G_2$ and consider $\Theta:G\to F$ given by $\Theta (g_1,g_2)=\Theta_1 g_1+ \Theta_2 g_2$. If we denote, for each $i=1,2$, $\rho_i:G\to G_i$ the canonical restriction mapping we can write $\Theta=\Theta_1 \rho_1+ \Theta_2 \rho_2$. Since $\mathcal N^p_o$ is a mapping ideal, we derive that $\Theta\in \mathcal{N}^p_o(G,F)$ with $\nu^p_o(\Theta)\le \nu^p_o(\Theta_1)+\nu^p_o(\Theta_2)$.

 We define $\Lambda:G_1/\ker\Theta_1\oplus_\infty G_2/\ker\Theta_2\to G/\ker\Theta$ by $\Lambda(\pi_1 g_1, \pi_2 g_2)=\pi (g_1, g_2)$, where $\pi:G\to G/\ker\Theta$ is the natural 1-quotient mapping. It is clear that $\Lambda$ is well defined and that it is completely bounded with $\|\Lambda\|_{cb}\le 1$. And the same is true for the mapping $R:E\to G/\ker\Theta$, given by $Rx= \Lambda(R_1x,R_2x)$, for any $x\in E$.

Now, we can assemble the following diagram:

\begin{equation*}
\xymatrix{
E \ar[r]^{T_1+T_2}  \ar[rd]_R    & F  & \ar[l]_{\Theta } \ar@{->>}[ld]^{\pi}  G \\
 &   G/\ker\Theta  \ar[u]^{\widetilde{\Theta}}  &
}
\end{equation*}

Straightforward computations show that the diagram is commutative and hence $T_1+T_2$ belongs to $\mathcal{K}_p^o(E,F)$. Also, $\kappa_p^o(T_1+T_2)\le \nu^p_o(\Theta)\le \nu^p_o(\Theta_1)+\nu^p_o(\Theta_2)$, for every $\Theta_1$ and $\Theta_2$ ``admissible'' for the factorization of $T_1$ and $T_2$. Therefore, $\kappa_p^o(T_1+T_2)\le \kappa_p^o(T_1) + \kappa_p^o(T_2)$.

\end{proof}

From the definition, it is straightforward to show that any $T \in \mathcal{N}^p_o(E,F)$  belongs to $\mathcal{K}_p^o(E,F)$ with $\kappa_p^o(T) \leq \nu^p_o(T)$.

We are now interested in giving an operator space structure to $\mathcal{K}_p^o$ based on the relation, in the Banach space setting, of the right $p$-nuclear and $p$-compact ideals.

\begin{definition}
A mapping ideal $\mathcal{A}$ is \emph{surjective} if for each completely 1-quotient mapping $Q: G \twoheadrightarrow E$ we have  $T  \in \mathcal{A}(E,F)$ whenever $T  Q \in \mathcal{A}(G,F)$.
In this case, we have $\Vert T \Vert_{\mathcal{A}} = \Vert T  Q \Vert_{\mathcal{A}}$ for every $T \in M_n(\mathcal{A}(E,F))$.
\end{definition}

\begin{definition}
Given a mapping ideal $\mathcal{A}$, its \emph{surjective hull} $\mathcal{A}^{sur}$ is the smallest surjective mapping ideal that contains $\mathcal{A}$.
\end{definition}

We recall \cite[Proposition 2.12.2]{Pisier-Operator-Space-Theory} that for every operator space $E$ there is a set $I$ and a family $(n_i)_{i \in I} \subset \mathbb{N}$ such that $E$ is the quotient of $\ell_1(\{S_1^{n_i} : i \in I \})$. We denote the latter space by $Z_E$ and $Q_E : Z_E \twoheadrightarrow E$ the corresponding completely 1-quotient mapping. The space $Z_E$ is projective (see for example \cite[Chapter 24]{Pisier-Operator-Space-Theory}).

\begin{proposition}
For every $E$ and $F$ operator spaces we have the following characterization of the  \emph{surjective hull}
$$\mathcal{A}^{sur}(E,F)= \{ T \in \mathcal{CB}(E,F) : T  Q_E \in \mathcal{A}(E,F)\},$$
with $\Vert T \Vert_{\mathcal{A}^{sur}} = \Vert T  Q_E \Vert_{\mathcal{A}}$.
\end{proposition}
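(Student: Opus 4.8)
The plan is to define the candidate space
$$
\mathcal{B}(E,F) := \{ T \in \CB(E,F) : T Q_E \in \mathcal{A}(Z_E,F) \},
$$
equip it with the operator space structure pulled back from $\mathcal{A}(Z_E,F)$ through the map $T \mapsto T Q_E$ (injective since $Q_E$ is onto), so that by construction $\n{T}_{\mathcal{B}} = \n{T Q_E}_{\mathcal{A}}$, and then prove $\mathcal{B} = \mathcal{A}^{sur}$. As $\mathcal{A}^{sur}$ is the smallest surjective mapping ideal containing $\mathcal{A}$, it suffices to check four things: (i) $\mathcal{B}$ is a mapping ideal; (ii) $\mathcal{B}$ is surjective; (iii) $\mathcal{A} \subseteq \mathcal{B}$ completely contractively; and (iv) $\mathcal{B}$ is contained, completely contractively, in every surjective mapping ideal containing $\mathcal{A}$. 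Throughout I would use that $Q_E$ is a complete $1$-quotient, so that right composition $S \mapsto S Q_E$ is a complete isometry of $\CB(E,F)$ onto $\{ S \in \CB(Z_E,F) : S|_{\ker Q_E} = 0 \}$, together with the projectivity of each $Z_X$, which lets one lift maps into $X$ through any complete $1$-quotient onto $X$ at the cost of an arbitrarily small constant.

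For (i), the normalization axiom is immediate: for $T \in M_n(\mathcal{B}(E,F))$ the complete isometry above and axiom (i) for $\mathcal{A}$ give $\n{T}_{M_n(\CB)} = \n{T Q_E}_{M_n(\CB)} \le \n{T Q_E}_{M_n(\mathcal{A})} = \n{T}_{M_n(\mathcal{B})}$. For the ideal property, take $r \in \CB(E_0,E)$, $s \in \CB(F,F_0)$ and $T \in M_n(\mathcal{B}(E,F))$. The point is that $r Q_{E_0} : Z_{E_0} \to E$ need not factor literally through $Q_E$, but by projectivity of $Z_{E_0}$, for any $\eps > 0$ it lifts to $\tilde r : Z_{E_0} \to Z_E$ with $Q_E \tilde r = r Q_{E_0}$ and $\n{\tilde r}_{\cb} \le (1+\eps)\n{r}_{\cb}$. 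Writing each entry as $s T_{kl} r Q_{E_0} = s (T_{kl} Q_E) \tilde r$ and invoking the ideal property of $\mathcal{A}$ yields $\n{s_n T r}_{M_n(\mathcal{B})} \le (1+\eps) \n{s}_{\cb} \n{T}_{M_n(\mathcal{B})} \n{r}_{\cb}$, and letting $\eps \to 0$ gives the desired bound.

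For (ii), let $Q : G \twoheadrightarrow E$ be a complete $1$-quotient and suppose $T Q \in \mathcal{B}(G,F)$, i.e. $T Q Q_G \in \mathcal{A}(Z_G,F)$; note that $T \in \CB(E,F)$ since $Q$ is a complete metric surjection. As $Q Q_G : Z_G \to E$ is again a complete $1$-quotient, projectivity of $Z_E$ provides, for each $\eps > 0$, a lift $\phi : Z_E \to Z_G$ with $Q Q_G \phi = Q_E$ and $\n{\phi}_{\cb} \le 1+\eps$; then $T Q_E = (T Q Q_G)\phi \in \mathcal{A}(Z_E,F)$ by the ideal property, so $T \in \mathcal{B}(E,F)$ with $\n{T}_{\mathcal{B}} \le (1+\eps)\n{T Q}_{\mathcal{B}}$. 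The reverse inequality $\n{T Q}_{\mathcal{B}} \le \n{T}_{\mathcal{B}}$ is just the ideal property with $\n{Q}_{\cb} \le 1$, so letting $\eps \to 0$ gives $\n{T}_{\mathcal{B}} = \n{T Q}_{\mathcal{B}}$ (and the same computation with amplifications handles the matrix levels). Claim (iii) is immediate from the ideal property of $\mathcal{A}$: $\n{T}_{\mathcal{B}} = \n{T Q_E}_{\mathcal{A}} \le \n{T}_{\mathcal{A}}$. Finally, for (iv), if $\mathcal{C}$ is a surjective mapping ideal with $\mathcal{A} \subseteq \mathcal{C}$ completely contractively and $T \in \mathcal{B}(E,F)$, then $T Q_E \in \mathcal{A}(Z_E,F) \subseteq \mathcal{C}(Z_E,F)$ with $\n{T Q_E}_{\mathcal{C}} \le \n{T}_{\mathcal{B}}$; surjectivity of $\mathcal{C}$ then forces $T \in \mathcal{C}(E,F)$ with $\n{T}_{\mathcal{C}} = \n{T Q_E}_{\mathcal{C}} \le \n{T}_{\mathcal{B}}$. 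This establishes $\mathcal{B} = \mathcal{A}^{sur}$, and the norm formula holds by construction of $\mathcal{B}$.

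The main obstacle, and the place where the operator-space hypotheses really enter, is the ideal property in (i) and the surjectivity in (ii): in both one must move a map defined on some $Z_X$ across a complete $1$-quotient, and there is in general no exact factorization. The resolution is precisely the projectivity of the spaces $Z_X$ (guaranteed by \cite[Chapter 24]{Pisier-Operator-Space-Theory}), which supplies approximate liftings with $\cb$-norm control; the ubiquitous $(1+\eps)$ factors are harmless because every estimate is closed up by letting $\eps \to 0$. A secondary point requiring care is that all these identities must be verified at each matrix level $M_n$, which is routine once one knows that right composition with a complete $1$-quotient is a complete isometry onto the annihilator of its kernel.
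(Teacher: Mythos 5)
Your proof is correct and follows essentially the same route as the paper: both define the candidate ideal via $T \mapsto TQ_E$ with the pulled-back norm, verify the mapping-ideal axioms and surjectivity by using projectivity of the spaces $Z_X$ to produce $(1+\eps)$-liftings across complete 1-quotients, and conclude by minimality. The only cosmetic difference is that you check minimality against an arbitrary surjective ideal $\mathcal{C} \supseteq \mathcal{A}$, whereas the paper runs the identical argument directly with $\mathcal{A}^{sur}$ in place of $\mathcal{C}$ and then invokes minimality for the reverse inclusion.
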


\begin{proof}
 We define $$\mathcal{U}(E,F) = \{ T \in \mathcal{CB}(E,F) : T  Q_E \in \mathcal{A}(E,F)\},$$
 endowed with the operator space norm given by $\Vert T \Vert_{\mathcal{U}} = \Vert T  Q_E \Vert_{\mathcal{A}}$, for every $T \in M_n(\mathcal{U}(E,F))$.

 We now show that $\mathcal{U}$ is a mapping ideal. Indeed, if $T \in M_n(\mathcal{U}(E,F))$ we obviously have $$\Vert T \Vert_{\mathcal{U}(E,F)} = \Vert T  Q_E \Vert_{\mathcal{A}} \geq \Vert T  Q_E \Vert_{cb} = \Vert T  \Vert_{cb}.$$
Let $T \in M_n(\mathcal{U}(E,F))$, $R \in \mathcal{CB}(E_0,F)$, $S \in \mathcal{CB}(E,F_0)$.
Since $Z_{E_0}$ is projective, given $\varepsilon >0$, there is a lifting $L_{\varepsilon} \in \mathcal{CB}(Z_{E_0},Z_E)$ of $R  Q_{E_0}$ with $\Vert L_{\varepsilon} \Vert_{cb} \leq (1+\varepsilon) \Vert R \Vert_{cb}$ such that  the following diagram commutes

$$
\xymatrix{
E_0   \ar[r]^R    & E   \ar[r]^{T_{i,j} }   & F  \ar[r]^S & F_0 \\
Z_0  \ar@{->>}[u]^{Q_{E_0}}  \ar@{.>}[r]_{L_{\varepsilon}}  & Z_E   \ar@{->>}[u]_{Q_{E}} &  &
}.
$$

Then, for every $1 \leq i, j \leq n$ we have $ST_{i,j}RQ_{E_0} = ST_{i,j}Q_{E}L_{\varepsilon} \in \mathcal{A}(Z_{E_0},F_0)$ and hence  $ ST_{i,j}R \in \mathcal{U}(E_0,F_0)$.
Moreover, by the ideal property of $\mathcal{A}$,

\begin{align*}
 \Vert S_n T R \Vert_{M_n(\mathcal{U}(E_0,F_0))} & = \Vert S_n T R Q_{E_0} \Vert_{M_n(\mathcal{A}(Z_{E_0},F_0))} =  \Vert S_n T  Q_{E} L_{\varepsilon} \Vert_{M_n(\mathcal{A}(Z_{E_0},F_0))} \\
 & \leq  \Vert S_n T  Q_{E}  \Vert_{M_n(\mathcal{A}(Z_{E},F_0))} (1+\varepsilon) \Vert R \Vert_{cb}  \\
 & \leq (1+\varepsilon) \Vert S \Vert_{cb} \Vert T \Vert_{M_n(\mathcal{U}(E,F))} \Vert R \Vert_{cb}.
\end{align*}
We now prove that the mapping ideal $\mathcal{U}$ is surjective. Let $Q : G \twoheadrightarrow E$ be a complete 1-quotient mapping and $T \in \mathcal{CB}(E,F)$ such that $TQ \in \mathcal{U}(G,F)$.
Since $Z_E$ is projective and $QQ_G : Z_G \twoheadrightarrow E$ is a complete 1-quotient mapping, given $\varepsilon >0$, there is a lifting  of $Q_E$, $L_{\varepsilon}: Z_E \to Z_G$ with cb-norm less than or equal to $1+ \varepsilon$.

We have $TQ_E = T Q Q_G L_{\varepsilon} \in \mathcal{A}(Z_E,F)$ since $TQQ_G \in \mathcal{A}(Z_G,F)$. Then, $T \in \mathcal{U}(E,F)$.
Also, if $T \in M_n(\mathcal{U}(E,F))$ we obtain

\begin{align*}
 \Vert T \Vert_{\mathcal{U}} & =  \Vert TQ_E  \Vert_{\mathcal{A}} =  \Vert TQ Q_G L_{\varepsilon}  \Vert_{\mathcal{A}} \leq (1 + \varepsilon) \Vert TQ Q_G \Vert_{\mathcal{A}} \\
 &  = (1 + \varepsilon) \Vert TQ \Vert_{\mathcal{U}} \leq (1+ \varepsilon) \Vert T \Vert_{\mathcal{U}}.
\end{align*}

Note that, by the definition, $\mathcal{U}(E,F) \subset \mathcal{A}^{sur}(E,F)$: if $T \in \mathcal{U}(E,F)$, we have $TQ_E \in \mathcal{A}(Z_E,F) \subset \mathcal{A}^{sur}(Z_E,F)$. Since $\mathcal{A}^{sur}$ is surjective we obtain that $T \in \mathcal{A}^{sur}(E,F)$.

We have shown that $\mathcal{U}$ is a surjective mapping ideal (which obviously contains $\mathcal A$). Then, by minimality we have the reverse inclusion.
\end{proof}

\begin{proposition}\label{projectivo coinciden}
Let $E$ be a projective operator space. Then, $T \in  \mathcal{K}_p^o(E,F)$ if and only if ${T \in \mathcal{N}^p_o(E,F)}$ and $\kappa_p^o(T) = \nu^p_o(T)$.
\end{proposition}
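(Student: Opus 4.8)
The plan is to prove the two implications separately, noting that one direction is valid for arbitrary operator spaces while only the other exploits projectivity. For the backward implication, if $T \in \mathcal{N}^p_o(E,F)$, then the remark immediately preceding this statement already gives $T \in \mathcal{K}_p^o(E,F)$ together with $\kappa_p^o(T) \le \nu^p_o(T)$, and this requires no hypothesis on $E$. Consequently, the whole proposition reduces to showing, under the assumption that $E$ is projective, that every $T \in \mathcal{K}_p^o(E,F)$ is in fact completely right $p$-nuclear with $\nu^p_o(T) \le \kappa_p^o(T)$; combining this with the reverse inequality above will force $\nu^p_o(T) = \kappa_p^o(T)$.

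For the main implication, I would fix $T \in \mathcal{K}_p^o(E,F)$ and $\varepsilon > 0$, and use the definition of $\kappa_p^o$ to select a factorization as in \eqref{p-compact o.s.}: an operator space $G$, a mapping $\Theta \in \mathcal{N}^p_o(G,F)$ with $\nu^p_o(\Theta) \le \kappa_p^o(T) + \varepsilon$, and $R \in \mathcal{CB}(E, G/\ker\Theta)$ with $\|R\|_{cb} \le 1$, satisfying $T = \widetilde{\Theta} \circ R$ and, by the definition of $\widetilde{\Theta}$, the identity $\Theta = \widetilde{\Theta} \circ \pi$. The crucial observation is that $\pi : G \to G/\ker\Theta$ exhibits $G/\ker\Theta$ as a quotient of the operator space $G$, so projectivity of $E$ applies to $R$: there is a lifting $\widetilde{R} \in \mathcal{CB}(E,G)$ with $\pi \circ \widetilde{R} = R$ and $\|\widetilde{R}\|_{cb} \le (1+\varepsilon)\|R\|_{cb} \le 1+\varepsilon$.

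It then remains to assemble the pieces. Chasing the diagram, $\Theta \circ \widetilde{R} = \widetilde{\Theta} \circ \pi \circ \widetilde{R} = \widetilde{\Theta} \circ R = T$, so $T$ factors through the completely right $p$-nuclear mapping $\Theta$ via the completely bounded map $\widetilde{R}$. Since $\mathcal{N}^p_o$ is a mapping ideal (Proposition \ref{proposition:ideal-property}), precomposition with $\widetilde{R}$ keeps $T$ inside $\mathcal{N}^p_o(E,F)$ and yields $\nu^p_o(T) = \nu^p_o(\Theta \circ \widetilde{R}) \le \nu^p_o(\Theta)\,\|\widetilde{R}\|_{cb} \le (\kappa_p^o(T)+\varepsilon)(1+\varepsilon)$. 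Letting $\varepsilon \to 0$ gives $\nu^p_o(T) \le \kappa_p^o(T)$, which completes the proof.

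I do not anticipate a serious obstacle: the argument is essentially a diagram chase in which projectivity is invoked precisely to lift $R$ against the complete $1$-quotient $\pi$. The only points demanding care are verifying that the lifting $\widetilde{R}$ interacts correctly with the identity $\Theta = \widetilde{\Theta} \circ \pi$ so that $\Theta \circ \widetilde{R}$ recovers $T$ on the nose, and bookkeeping the $(1+\varepsilon)$ factors so that the two opposite inequalities collapse to equality of norms in the limit.
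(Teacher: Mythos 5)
Your proposal is correct and follows essentially the same route as the paper's proof: use projectivity of $E$ to lift $R$ through the complete $1$-quotient $\pi : G \to G/\ker\Theta$ to some $\widetilde{R}$ of cb-norm at most $1+\varepsilon$, observe that $T = \Theta \circ \widetilde{R}$ is then completely right $p$-nuclear by the ideal property, and let $\varepsilon \to 0$ after combining with the trivial inequality $\kappa_p^o \le \nu^p_o$ from the remark preceding the proposition. The only cosmetic difference is bookkeeping: the paper lifts first and takes the infimum over all admissible $\Theta$ at the end, while you select a near-optimal $\Theta$ up front, which yields the same conclusion.
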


\begin{proof}
 Let $T  \in  \mathcal{K}_p^o(E,F)$, and consider a factorization as in \eqref{p-compact o.s.}. Given $\varepsilon>0$, since $E$ is projective, there is a lifting of $R$, $\widetilde R_{\varepsilon} \in \mathcal{CB}(E,G)$ with $\Vert \widetilde R_{\varepsilon} \Vert \leq 1 + \varepsilon$, such that the following diagram commutes

 \begin{equation*}
\xymatrix{
E \ar[r]^T \ar@{.>}@(ur,ul)[rr]^{\widetilde R_{\varepsilon}} \ar[rd]_R    & F  & \ar[l]_{\Theta } \ar@{->>}[ld]^{\pi}  G \\
 &   G/\ker\Theta  \ar[u]^{\widetilde{\Theta}}.  &
}
\end{equation*}

 Then, $T = \Theta \widetilde R_{\varepsilon} \in \mathcal{N}^p_o(E,F)$ and $\nu^p_o(T) \leq (1 + \varepsilon) \nu^p_o(\Theta)$. Observe that this holds for every $\Theta$ verifying Equation \eqref{p-compact o.s.}. Thus, by definition of the norm $\kappa_p^o(T)$, we obtain $\nu^p_o(T) \leq  (1 + \varepsilon) \kappa_p^o(T)$ and the result follows.
\end{proof}

\begin{proposition}
Let $E$ and $F$ be operator spaces. Then $T \in  \mathcal{K}_p^o(E,F)$ if and only if $T  Q_E \in \mathcal{N}^p_o(Z_E,F)$ and $\kappa_p^o(T) = \nu^p_o(T Q_E)$.
\end{proposition}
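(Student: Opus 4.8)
The plan is to recognize this proposition as the concrete realization of the identity $\mathcal{K}_p^o = (\mathcal{N}^p_o)^{sur}$: read together with the surjective-hull characterization proved above, it pins down both the underlying set and the norm of the surjective hull. I would prove the two implications separately and then splice the resulting inequalities to extract the norm equality. The whole argument rests on two facts already available: the projectivity of $Z_E$ and the quotient property of the complete $1$-quotient $Q_E : Z_E \twoheadrightarrow E$ (namely that postcomposition $R \mapsto R\circ Q_E$ is a complete isometry of $\CB$-spaces). Both directions also lean on the ideal property of $\mathcal{N}^p_o$ from Proposition \ref{proposition:ideal-property}.

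For the ``if'' direction, suppose $T Q_E \in \mathcal{N}^p_o(Z_E,F)$ and set $\Theta := T Q_E$. Since $Q_E$ is bounded, $\ker Q_E \subseteq \ker \Theta$, so the complete $1$-quotient $\pi : Z_E \to Z_E/\ker\Theta$ descends through $Q_E$: there is a unique $R : E \to Z_E/\ker\Theta$ with $R\, Q_E = \pi$. By the quotient property of $Q_E$ we have $\n{R}_{\cb} = \n{R\,Q_E}_{\cb} = \n{\pi}_{\cb} \le 1$. Evaluating on $x = Q_E(z)$ gives $\widetilde{\Theta}R(x) = \widetilde{\Theta}(\pi(z)) = \Theta(z) = T Q_E(z) = T(x)$, so the diagram \eqref{p-compact o.s.} commutes with $G = Z_E$; hence $T \in \mathcal{K}_p^o(E,F)$ and $\kappa_p^o(T) \le \nu^p_o(\Theta) = \nu^p_o(T Q_E)$.

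For the ``only if'' direction, fix a factorization of $T$ as in \eqref{p-compact o.s.} through $\Theta \in \mathcal{N}^p_o(G,F)$ and $R : E \to G/\ker\Theta$ with $\n{R}_{\cb}\le 1$. Then $R\,Q_E : Z_E \to G/\ker\Theta$ has $\cb$-norm at most $1$; since $Z_E$ is projective, given $\varepsilon>0$ it lifts through the quotient $\pi : G \to G/\ker\Theta$ to some $L_\varepsilon : Z_E \to G$ with $\n{L_\varepsilon}_{\cb}\le 1+\varepsilon$. Consequently $T Q_E = \widetilde{\Theta}\,R\,Q_E = \widetilde{\Theta}\,\pi\, L_\varepsilon = \Theta\, L_\varepsilon$, and the ideal property of $\mathcal{N}^p_o$ yields $T Q_E \in \mathcal{N}^p_o(Z_E,F)$ with $\nu^p_o(T Q_E) \le \nu^p_o(\Theta)\,\n{L_\varepsilon}_{\cb} \le (1+\varepsilon)\nu^p_o(\Theta)$. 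Taking the infimum over all admissible $\Theta$ and letting $\varepsilon\to 0$ gives $\nu^p_o(T Q_E) \le \kappa_p^o(T)$.

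Finally I would combine the two bounds. For $T \in \mathcal{K}_p^o(E,F)$ the ``only if'' step first establishes $T Q_E \in \mathcal{N}^p_o(Z_E,F)$ together with $\nu^p_o(T Q_E) \le \kappa_p^o(T)$; feeding this membership back into the ``if'' step produces the reverse inequality $\kappa_p^o(T) \le \nu^p_o(T Q_E)$, so the two norms coincide. The main delicate point is the $\cb$-norm bookkeeping in the lifting step: projectivity only furnishes liftings up to the factor $1+\varepsilon$, so the exact norm equality can only be recovered in the limit and must be assembled from the two one-sided estimates rather than read off a single diagram. One must also verify carefully that the descended map $R$ in the ``if'' direction is genuinely completely contractive, which is precisely where the complete-metric-surjection property of $Q_E$ is essential.
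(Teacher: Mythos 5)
Your proposal is correct and takes essentially the same approach as the paper: your ``if'' direction is exactly the paper's construction of the descended map $R$ (defined by $RQ_E=\pi$, completely contractive because $Q_E$ is a complete $1$-quotient), and your ``only if'' direction is the paper's argument with the lifting step of Proposition \ref{projectivo coinciden} (projectivity of $Z_E$ plus the ideal property of $\mathcal{N}^p_o$) inlined rather than cited. The final splicing of the two one-sided inequalities into the norm equality matches the paper as well.
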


\begin{proof}
If $T \in \mathcal{K}_p^o(E,F)$, then $TQ_E \in \mathcal{K}_p^o(Z_E,F)$. Now by Proposition \ref{projectivo coinciden}, $TQ_E \in \mathcal{N}^p_o(Z_E,F)$ and also $\nu^p_o(TQ_E) = \kappa_p^o(TQ_E) \leq \kappa_p^o(T)$, since $Z_E$ is projective.

Reciprocally, if $TQ_E \in \mathcal{N}^p_o(Z_E,F)$, we define $R \in \mathcal{CB}(E,Z_E/\ker\Theta)$ in the following way: $Rx := \pi y$ where $Q_Ey=x$. It is not difficult to check that $R$ is well defined and also  $\Vert R \Vert_{cb} \leq 1$. If we denote $\Theta :=TQ_E$ we have
$\widetilde \Theta R x = \widetilde \pi y = \Theta y = TQ_Ey = Tx$ for every $x \in E$.
Thus, the following diagram commutes

 \begin{equation*}
\xymatrix{
E \ar[r]^T \ar@{.>}[rd]^{R}  & F  & \ar[l]_{\Theta } \ar@{->>}[ld]^{\pi}  Z_E \\
 &   Z_E/\ker\Theta  \ar[u]^{\widetilde{\Theta}}.  &
}
\end{equation*}

Therefore, $T$ is an operator $p$-compact mapping and $\kappa_p^o(T) \leq \nu^p_o(TQ_E)$. This concludes the proof.
\end{proof}

We have shown that for every $E$ and $F$ operator spaces, we have the equality (as Banach spaces)
$$\mathcal{K}_p^o(E,F) = (\mathcal{N}^p_o)^{sur}(E,F).$$

This induces a natural operator space structure for $\mathcal{K}_p^o(E,F)$. Indeed, if $T \in M_n(\mathcal{K}_p^o(E,F))$, we define
$$\Vert T \Vert_{M_n(\mathcal{K}_p^o(E,F))} := \Vert T \Vert_{M_n((\mathcal{N}^p_o)^{sur}(E,F))} = \Vert T  Q_E \Vert_{M_n(\mathcal{N}^p_o(Z_E,F))}.$$

As a consequence we can say that $\mathcal{K}_p^o$ is a mapping ideal through the following identification:

\begin{equation}\label{Npsur}
\mathcal{K}_p^o=(\mathcal{N}^p_o)^{sur}.
\end{equation}

In the Banach space setting, a continuous linear mapping is compact if it sends the unit ball into a relatively compact set. Equivalently, it sends the unit ball into the closure of the convex hull of a null sequence. In the operator space framework, if we look at how a mapping acts on the ``matrix unit ball'' (see definition below) the previous conditions are not equivalent. Webster \cite{webster1997local, Webster1998} noticed this and studied several notions of compactness: operator compactness, strong operator compactness, and matrix compactness. Let us recall the definition of ``operator compactness'' that later we will  naturally extend  to the case of $p$-compactness.

If $E$ is an operator space, a \textit{matrix set} is a sequence of sets $\mathbf{K}=(K_n)$, where $K_n\subset M_n(E)$, for all $n$. The closure of a matrix set means the closure of each set of the sequence in the corresponding matrix space. The \textit{matrix unit ball} is the matrix set $\left(B_{M_n(E)}\right)$.

Given $X \in \mathcal S_\infty[E]$, Webster defined the \textit{absolutely matrix convex hull} of $X$ to be the matrix set $\mathbf{co}(X)$ where
$$\big(\mathbf{co}(X)\big)_k = \{v \in M_k(E) : \exists\sigma \in M_k(M_{\infty}^{fin}), \; \Vert \sigma \Vert_{M_k(S_{1})} \leq 1 \textrm{ such that } (\sigma \otimes id)X=v   \}.$$

A mapping $T\in\mathcal{CB}(E,F)$ is \textit{operator compact} if the image by $T$ of the matrix unit ball of $E$ is contained in $\overline{\mathbf{co}(Y)}$, for some $Y\in \mathcal  S_\infty[F]$.

Based on the work of Webster we now define the notion of  $p$-absolutely matrix convex hull.

\begin{definition}
Let  $E$ be an operator space and $X\in S_p [E]$, for $1 \leq p \leq \infty$, we define the $p$-absolutely matrix convex hull of $X$  to be the matrix set $\mathbf{co}_p(X)$ where
$$\big(\mathbf{co}_p(X)\big)_k = \{v \in M_k(E) : \exists\sigma \in M_k(M_{\infty}^{fin}), \; \Vert \sigma \Vert_{M_k(S_{p'})} \leq 1 \textrm{ such that } (\sigma \otimes id)X=v   \}.$$

\end{definition}

We say that a matrix set $\mathbf{K}=(K_n)$ in $E$ is \emph{operator $p$-compact} if $\mathbf{K}$ is is contained in $\overline{\mathbf{co}_p(X)}$, for some $X\in S_p [E]$.

We are using the term ``operator $p$-compact'' for two seemingly different notions, but they will turn out to be the same.
If we denote by $\mathbf{B}_E$ the matrix unit ball of $E$,
we will prove that  $T: E \to F$ is \textit{operator $p$-compact} if and only if  the image under $T$ of the matrix unit ball of $E$ is an operator $p$-compact matrix set in $F$.

Note that the $\infty$-absolutely matrix convex hull is Webster's absolutely matrix convex hull, and thus operator $\infty$-compact mappings are the same as operator compact mappings.

\begin{remark} \label{remark capsula Webster}
Given $1\le p\le\infty$, for any $X \in S_p[E]$ and $k \in \N$ we have
$$
\left(\overline{\mathbf{co}_p(X)}\right)_k = \{v \in M_k(E) : \exists\sigma \in M_k(S_{p'}), \; \Vert \sigma \Vert_{M_k(S_{p'})} \leq 1 \textrm{ such that } (\sigma \otimes id)X=v   \},$$
where, in the case $p=1$ the space $S_{p'}$ should be replaced by $M_\infty$.
\end{remark}

\begin{proof}
Indeed, let us denote by $C_k$ the set on the right hand side. For $1<p\le\infty$, it is enough to show that $C_k$ is closed. Consider
$\Psi: M_k(S_{p'}) \to M_k(E)$ defined by $\sigma \mapsto (\sigma \otimes id)(X)$.
Note that the image by $\Psi$ of $B_{M_k(S_{p'})}$, the closed unit ball of $M_k(S_{p'})$, is exactly $C_k$.
Take a sequence $(\sigma_l)_l \in B_{M_k(S_{p'})}$ such that $\Psi(\sigma_l) \to v \in M_k(E)$. We have to see that $v= \Psi(\sigma)$ for certain $\sigma \in M_k(S_p')$ with $\Vert \sigma \Vert_{M_k(S_{p'})} \leq 1$.
Since $B_{M_k(S_{p'})}$ is weak$^{*}$ sequentially compact (since the predual of $M_k(S_{p'})$ is separable) there is a subsequence $(\sigma_{l_j})_j$ weak$^{*}$ convergent to an element $\sigma \in B_{M_k(S_{p'})}$.
By uniqueness of the limit our conclusion follows once proving that for all matrices $v' \in M_k(E')$ we have the convergence of the scalar pairing (in the sense of  \cite[1.1.24]{Effros-Ruan-book})
$$
\langle  \Psi(\sigma_{l_j}),v' \rangle  \to \langle  \Psi(\sigma),v' \rangle.
$$
Now,
$$ \langle  \Psi(\sigma_{l_j}),v' \rangle = \langle ( \sigma_{l_j} \otimes id)(X),v' \rangle = \langle  \sigma_{l_j}, (id \otimes v')(X) \rangle \to \langle  \sigma, (id \otimes v')(X) \rangle  = \langle \Psi(\sigma),v' \rangle.
$$
For $p=1$, the previous argument (replacing $S_{p'}$ by $M_\infty$) shows that $\left(\overline{\mathbf{co}_1(X)}\right)_k\subset C_k$. The other inclusion runs as follows. Recall, using the notation as in Theorem \ref{Thm:Q-tilde}, that any $X \in S_1[E]$ can be approximated by its ``truncations'' $\{\widetilde{\tau}^m(X)\}_m$ \cite[Lemma 1.12]{Pisier-Asterisque-98} and denote by $\rho^m:M_\infty\to M_m$ the truncated mapping between these matrix spaces. Now, for $v=(\sigma \otimes id)X \in C_k$ with $\sigma\in M_k(M_\infty)$, we have to see that $\{((\rho^m)_k\sigma \otimes id)X\}_m\subset \mathbf{co}_1(X)$ approximates $v$. Indeed,
\begin{align*}
\|v-((\rho^m)_k\sigma \otimes id)X\|_{M_k(E)}  & =  \|((\sigma-(\rho^m)_k\sigma)\otimes id)X\|_{M_k(E)} =  \|(\sigma\otimes id) (X-(\widetilde{\tau}^m)_k(X))\|_{M_k(E)} \\
&  \le \|X-(\widetilde{\tau}^m)_k(X)\|_{M_k(S_1[E])} \underset{m\to\infty}{\longrightarrow} 0.
\end{align*}
\end{proof}

\bigskip

The following result shows that the first definition of $p$-compactness (the one which involves the factorization through completely right $p$-nuclear mappings)
and the definition based on Webster's work are the same.
We highlight the analogy presented between Equations \eqref{norma kp Banach} and \eqref{kpo}, below.

\begin{theorem} \label{thm:characterization-p-compact-mapping-via-p-compact-sets}
Let $T: E \to F$ be a completely bounded mapping and $1\le p\le \infty$. Then $T$ is operator $p$-compact if and only if $T(\mathbf{B}_E)$ is an operator $p$-compact matrix set in $F$. Moreover,

\begin{equation}\label{kpo}
\kappa_p^o(T) = \inf\{ \Vert Y \Vert_{S_p[F]} : T(\mathbf{B}_E) \subset \overline{\mathbf{co}_p(Y)}\}.
\end{equation}
\end{theorem}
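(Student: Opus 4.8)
The plan is to identify Webster's hull $\overline{\mathbf{co}_p(Y)}$ with the image of the matrix unit balls under a canonical completely right $p$-nuclear map attached to $Y$, and then read off both implications from the factorization theorem for $\mathcal N^p_o$. Concretely, for $Y=(y_{ij})\in S_p[F]$ I would introduce $\Theta_Y\colon S_{p'}\to F$ (with $S_{p'}$ replaced by $M_\infty$ when $p=1$, as in Remark \ref{remark capsula Webster}) defined by $\Theta_Y(\sigma)=(\sigma\otimes id)Y$. Writing $\varepsilon\in S_{p'}^w[(S_{p'})']$ for the element given by the matrix units, which has norm one (as in Proposition \ref{proposition:multiplication-operators-are-nuclear}), one checks that $\Theta_Y=J^p\widetilde Q^p(\varepsilon\otimes Y)$, so that $\Theta_Y\in\mathcal N^p_o$ with $\nu^p_o(\Theta_Y)\le\|\varepsilon\|\,\|Y\|_{S_p[F]}=\|Y\|_{S_p[F]}$ by Corollary \ref{defB_p}(2). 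The content of Remark \ref{remark capsula Webster} is then exactly that $\big(\overline{\mathbf{co}_p(Y)}\big)_k=(\Theta_Y)_k\big(B_{M_k(S_{p'})}\big)$ for every $k$.

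For the implication that an operator $p$-compact $T$ sends $\mathbf{B}_E$ to an operator $p$-compact matrix set, I would start from a factorization $T=\widetilde\Theta R$ as in \eqref{p-compact o.s.} with $\Theta\in\mathcal N^p_o(G,F)$ and $\|R\|_{\cb}\le1$. By Corollary \ref{defB_p}(2) choose a representation $\Theta=\sum_{i,j}x_{ij}\otimes y_{ij}$ with $\|(x_{ij})\|_{S_{p'}^w[G']}\le1$ and $\|Y\|_{S_p[F]}\le\nu^p_o(\Theta)+\eta$, where $Y=(y_{ij})$. Through Lemma \ref{lemma-Spweak-as-mappings}, $(x_{ij})$ corresponds to a map in $\CB(S_p,G')$, whose adjoint restricted to $G$ gives $\Xi\colon G\to S_{p'}$, $\Xi(g)=(x_{ij}(g))_{ij}$, with $\|\Xi\|_{\cb}\le\|(x_{ij})\|_{S_{p'}^w[G']}\le1$; a bookkeeping check of the trace duality gives $(\Xi_k(g)\otimes id)Y=\Theta_k(g)$, so $\Theta(\mathbf{B}_G)\subset\overline{\mathbf{co}_p(Y)}$. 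Since $\pi$ is a complete $1$-quotient and $\|R\|_{\cb}\le1$, for $x\in B_{M_k(E)}$ one obtains $T_k(x)\in\widetilde\Theta_k\big(B_{M_k(G/\ker\Theta)}\big)\subset\overline{\Theta_k(B_{M_k(G)})}\subset\big(\overline{\mathbf{co}_p(Y)}\big)_k$, using that the last set is closed. Taking the infimum over admissible $\Theta$ and letting $\eta\to0$ yields $\inf\{\|Y\|_{S_p[F]}:T(\mathbf{B}_E)\subset\overline{\mathbf{co}_p(Y)}\}\le\kappa_p^o(T)$.

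Conversely, assume $T(\mathbf{B}_E)\subset\overline{\mathbf{co}_p(Y)}$ for some $Y\in S_p[F]$, take $G=S_{p'}$ and $\Theta_Y$ from the first paragraph (with $G=M_\infty$ when $p=1$). By the key identity, $T_k(B_{M_k(E)})\subset(\Theta_Y)_k(B_{M_k(G)})=(\widetilde{\Theta_Y})_k\,\pi_k(B_{M_k(G)})$ for all $k$. Since $\widetilde{\Theta_Y}$ is injective and, by homogeneity, $T(E)$ lies in its range, $R:=(\widetilde{\Theta_Y})^{-1}T\colon E\to G/\ker\Theta_Y$ is a well-defined linear map; writing $T_k x=(\widetilde{\Theta_Y})_k(\pi_k\sigma)$ with $\|\sigma\|_{M_k(G)}\le1$ forces $R_k x=\pi_k\sigma$ of norm $\le1$, so $\|R\|_{\cb}\le1$ and $T=\widetilde{\Theta_Y}R$. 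Hence $T$ is operator $p$-compact with $\kappa_p^o(T)\le\nu^p_o(\Theta_Y)\le\|Y\|_{S_p[F]}$, and the infimum over $Y$ gives the reverse inequality, establishing both the equivalence and \eqref{kpo}.

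The main obstacle is the first implication's passage from the tensor representation of $\Theta$ to the map $\Xi$: one must invoke the completely isometric identification $S_{p'}^w[G']=\CB(S_p,G')$ of Lemma \ref{lemma-Spweak-as-mappings}, verify that passing to the adjoint yields a complete contraction $G\to S_{p'}$, and match the trace-duality conventions so that $(\Xi_k(g)\otimes id)Y=\Theta_k(g)$ holds on the nose. A secondary subtlety is the case $p=1$, where $S_{p'}$ must be replaced by $M_\infty=(S_1)'$ throughout; this is legitimate because the relevant $\varepsilon$ is then the completely isometric inclusion $S_1\hookrightarrow(M_\infty)'$ and still has norm one, so the same arguments apply verbatim with $G=M_\infty$.
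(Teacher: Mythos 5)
Your proof is correct, and its overall architecture is the same as the paper's: both directions rest on Remark \ref{remark capsula Webster} (identifying $\left(\overline{\mathbf{co}_p(Y)}\right)_k$ with the image of $B_{M_k(S_{p'})}$ under $\sigma\mapsto(\sigma\otimes id)Y$), on the quotient description of $d_p^o$ from Theorem \ref{Thm:Q-tilde} and Corollary \ref{defB_p}, and on recovering $R$ from the injectivity of $\widetilde{\Theta}$ exactly as the paper does. The genuine divergence is your uniform construction of $\Theta_Y$. The paper argues as you do only for $1<p<\infty$ (there $(S_{p'})'=S_p$, so $B^p(id,Y)$ already lives in the right space); for the endpoints it invokes extra machinery: for $p=\infty$ the element $B^\infty(id,Y)$ lies in $S_\infty\widehat{\otimes}_{d_\infty^o}F$ rather than in $(S_1)'\widehat{\otimes}_{d_\infty^o}F=M_\infty\widehat{\otimes}_{d_\infty^o}F$, and the paper transports it there via the Embedding Lemma (using that $d_\infty^o$ is finitely generated and $S_\infty$ is locally reflexive), while for $p=1$ it embeds $S_1[F]$ into $S_1''\widehat{\otimes}_{\proj}F$, again by the Embedding Lemma; both facts are quoted from \cite{dimant2015biduals,CDDG}. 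Your choice of $\varepsilon$ --- the identity of $\CB(S_p,S_p)$ for $1<p<\infty$, the inclusion $S_\infty\hookrightarrow M_\infty$ viewed in $S_1^w[M_\infty]$ for $p=\infty$, and $\iota_{S_1}\colon S_1\to(M_\infty)'$ viewed in $S_\infty^w[(M_\infty)']$ for $p=1$ --- places $\widetilde{Q}^p(\varepsilon\otimes Y)$ directly in the correct space with $d_p^o$-norm at most $\|Y\|_{S_p[F]}$, so the endpoint cases require nothing beyond Lemma \ref{lemma-Spweak-as-mappings} and Corollary \ref{defB_p}. This buys a self-contained, case-free argument (no Embedding Lemma, no local reflexivity of $S_\infty$, no reliance on the in-preparation \cite{CDDG}); the paper's route buys only the reuse of the element $B^p(id,Y)$ set up for the generic case. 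Two further minor differences, both harmless: in the forward direction you represent $\Theta$ with a factor in $S_{p'}^w[G']$ via Corollary \ref{defB_p}(2), whereas the paper uses $S_{p'}\widehat{\otimes}_{\min}G'$ via Theorem \ref{thm:hard-Chevet-Saphar} --- either yields the required complete contraction $G\to S_{p'}$ (resp.\ into $M_\infty$ when $p=1$) with the correct coordinates; and your closure bookkeeping $T_k(x)\in\widetilde{\Theta}_k\big(B_{M_k(G/\ker\Theta)}\big)\subset\overline{\Theta_k(B_{M_k(G)})}\subset\big(\overline{\mathbf{co}_p(Y)}\big)_k$ is in fact more careful than the paper's one-line claim that $T(\mathbf{B}_E)$ is contained in $\Theta(\mathbf{B}_G)$, which is accurate only up to closure.
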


\begin{proof}
Suppose $T$ is operator $p$-compact. Without loss of generality we suppose that $\kappa_p^o(T) < 1$.
By the commutative diagram given in \eqref{p-compact o.s.} there is a completely right $p$-nuclear mapping $\Theta:G\to F$  with $\nu^p_o$-norm less than $1$.
It suffices to see that $\Theta$ is operator $p$-compact, since $T(\mathbf{B}_E)$ is contained in $\Theta(\mathbf{B}_G)$.
By Theorem \ref{thm:hard-Chevet-Saphar} we can write $\Theta = J^p\circ Q^p (X\otimes Y)$ where $\Vert X \Vert_{S_{p'}\widehat\otimes_{\min} G'} < 1$  and $\Vert Y \Vert_{S_p[F]} < 1$.
For $g \in M_n(G)$ of norm less than one, we will show that $\Theta_n g \in (\overline{\mathbf{co}_p(Y)})_n$.
Indeed,  for $1<p\le \infty$, since $X  \in S_{p'}\widehat\otimes_{\min} G'$ we derive $X'\circ \iota_G\in \mathcal{CB}(G, S_{p'})$. For $p=1$, being $X  \in S_{\infty}\widehat\otimes_{\min} G'$ we obtain $X'\circ \iota_G\in \mathcal{CB}(G, M_{\infty})$.
Consider $\sigma := \left(X'\circ \iota_G\right)_n g \in M_n(S_{p'})$ (replacing $S_{p'}$ by $M_\infty$ in the case $p=1$), then, $\Vert \sigma \Vert_{M_n(S_{p'})} < 1$   and it is not hard to check that
$$\Theta_n g =  (\sigma \otimes id) (Y).
$$
Then, $T(\mathbf{B}_E) \subset \Theta(\mathbf{B}_G) \subset \overline{\mathbf{co}_p(Y)}$ and  hence $ \inf\{ \Vert Y \Vert_{S_p[F]} : T(\mathbf{B}_E) \subset \overline{\mathbf{co}_p(Y)}\} < 1$.

For the converse, let $T: E \to F$ be an operator $p$-compact mapping with $ \inf\{ \Vert Y \Vert_{S_p[F]} : T(\mathbf{B}_E) \subset \overline{\mathbf{co}_p(Y)}\}< 1$. We begin by considering the case $1<p<\infty$. Denote by $\Theta : S_{p'} \to F$, the operator given by $J^p \circ B^p (id,  Y)$, which by definition is in $\mathcal N^p_o$ since $id \in S_{p'}^w[S_p]=\mathcal{CB}(S_p, S_p)$. Observe that by Corollary \ref{defB_p} (1), $\nu_o^p(\Theta) < 1$.  Remark \ref{remark capsula Webster} asserts that for each $x \in E$ there is $\sigma \in S_{p'}$ such that $Tx = (\sigma \otimes id) (Y)$. Defining  $Rx:=[\sigma]$  we have the following commutative diagram

 \begin{equation*}
\xymatrix{
E \ar[r]^T\ar[rd]_R    & F  & \ar[l]_{\Theta } \ar@{->>}[ld]^{\pi}  S_{p'} \\
 &   S_{p'}/\ker\Theta  \ar[u]^{\widetilde{\Theta}},  &
}.
\end{equation*}
Note that $R$ is well defined. Moreover, if $x \in B_{M_k(E)}$ by hypothesis
$T_k x =( \widetilde{ \sigma} \otimes id)(Y)$, for certain $\widetilde \sigma $ with $\Vert \widetilde \sigma  \Vert_{M_k(S_{p'})} \leq 1$.
Checking coordinates we have $T(x_{i_j})= ( \widetilde{ \sigma}_{i,j} \otimes id)(Y)$ and therefore $R(x_{i,j}) = [ \widetilde{ \sigma}_{i,j} ]$. Hence, $R_k x = ([\widetilde{ \sigma}_{i,j}])_{i,j})$ and then $\Vert R \Vert_{cb} \leq 1$.
We conclude that $T \in \mathcal K^o_p(E;F)$ and $\kappa_p^o(T) \leq \nu_o^p(\Theta) < 1$.

For $p=1$, we have $Y \in S_1[F]=S_1\widehat\otimes_{proj} F$. Since the projective tensor norm satisfies the Embedding Lemma (see \cite{dimant2015biduals} or \cite{CDDG}), there is a completely isometric embedding $\kappa: S_1\widehat\otimes_{proj} F\to S_1^{''}\widehat\otimes_{proj} F$. Now,  $\Theta= J^1\circ \kappa(Y)$ belongs to $\mathcal N^1_o(M_\infty,F)$, where  $J^1: S_1^{''}\widehat\otimes_{proj} F\to S_1^{''}\widehat\otimes_{\min} F$ is the canonical mapping. The rest of the argument follows as in the previous case.

For $p=\infty$, we have $Y \in S_\infty [F]$ and so $B^\infty (id,  Y)$ belongs to $S_\infty \widehat\otimes_{d_\infty^o} F$. Since $d_\infty^o$ is finitely generated and $S_\infty$ is locally reflexive, we can appeal again to the Embedding Lemma \cite{CDDG} to get a completely isometric embedding $\kappa: S_\infty\widehat\otimes_{d_\infty^o} F\to M_\infty\widehat\otimes_{d_\infty^o} F$. Hence, $\Theta=J^\infty \circ\kappa\circ B^\infty (id,  Y)$ belongs to $\mathcal N_o^\infty(S_1,F)$ and the proof finishes as in the two previous cases.
\end{proof}

The previous theorem, together with the relation presented in Equation \eqref{Npsur}, allows us to endow Webster's class
(of operator compact mappings) with a natural operator space structure, showing  that this  class is indeed a mapping ideal. Namely, it is exactly the ideal $\mathcal{K}_{\infty}^o$.

\section*{Some final questions} \label{section questions}

We present some open questions regarding the mapping ideal $\mathcal K_p^o$. \smallskip

In parallel to the Banach space case, we say that a mapping ideal $(\mathfrak{A},\mathbf{A})$ and  an operator space tensor norm $\alpha$  are \emph{associated}, denoted $(\mathfrak{A},\mathbf{A}) \sim \alpha$ if for every pair of finite-dimensional operator spaces $(E,F)$ we have a complete isometry
$
\mathfrak{A}(E,F) = E' \otimes_{\alpha} F,
$
given by the canonical map.
Notice that since this definition is based only on finite-dimensional spaces, two different mapping ideals can be associated to the same tensor norm (and vice versa).
For example, from Definition \ref{p-nuclear def} it is clear that $d_p^o$ is associated to the mapping ideal $\mathcal N_o^p$.

Recall that an operator space tensor norm $\alpha$ is called left-injective if for any operator spaces $E_1, E_2, F$ and a complete isometry $i:E_1 \to E_2$, the mapping
$$
   i\otimes id_F : E_1 \otimes_{\alpha} F \to E_2 \otimes_{\alpha} F
 $$
is  completely isometric.
Given an operator space tensor norm we denote by $/ \alpha$ the greatest left-injective tensor norm that is dominated by $\alpha$. This essentially mimics the notion given in the context of normed spaces deeply described in \cite[Chapter 20]{Defant-Floret} and \cite[Section 7.2]{Ryan}.

In the Banach space setting, it is shown in  \cite[Theorem 3.3]{galicer2011ideal} that $/ d_p$ is the tensor norm associated to $\mathcal K_p$. Therefore, is natural to ask if an analogous result holds in the operator space framework.

  \begin{question} \label{preg1}
   Is the operator space tensor norm $/ d_p^o$ associated to  the mapping ideal $\mathcal{K}_p^o$?
  \end{question}

  This question is a particular case of the following more general one (whose statement is valid in the Banach space context):
  \begin{question}\label{preg2}
   Let $\mathcal A$ be a mapping ideal and $\alpha$ its associated operator space tensor norm. Is $/ \alpha$ associated to $\mathcal A^{sur}$?
  \end{question}

Note that     $d_p^o \sim \mathcal N_o^p$ and, by Equation \eqref{Npsur},  we have  $\mathcal K_p^o = (\mathcal N_o^p)^{sur}$. Thus an affirmative answer to this question would provide also an answer to the first one.
Result of this kind for the Banach space setting, although sometimes hard to see it at first glance, are based on \emph{local techniques} (e.g., \cite[Proposition 20.9]{Defant-Floret}) which are no longer valid in the operator space framework.

In \cite{CDDG} it is shown that if $\beta \sim \mathcal A^{sur}$ then $\beta$ is left-injective. Therefore, since $\mathcal A \subset \mathcal A^{sur}$ we have that $\beta \leq / \alpha$. An affirmative answer to Question \ref{preg2} is obtained in  \cite{CDDG} for the particular case where $\alpha$ is left accessible (definition similar to the Banach space case).
It is known in the Banach space setting that $d_p$ satisfies this property (\cite[Theorem 21.5]{Defant-Floret}, \cite[Proposition 7.21.]{Ryan}), but unfortunately we do not know yet whether this also holds for the operator space counterpart $d_p^o$.

Another interesting question, which involves the mapping ideal $\mathcal K_{\infty}^o$, was posed by Webster in his thesis \cite[Section 4.1]{webster1997local}:

\begin{problem}{Webster:}\label{problema webster}
``It is an open question as to whether the space of operator compact maps must be closed in the cb-norm topology''.
\end{problem}
In other words, if a sequence of operator compact mappings $(T_n)_{n \in \N}$ converges to $T$ in the completely bounded norm ($\Vert T - T_n \Vert_{cb} \to 0$), does it imply that $T$ is also an operator compact map?

By Theorem \ref{thm:characterization-p-compact-mapping-via-p-compact-sets}, as a direct consequence of the Open Mapping Theorem we see that Problem \ref{problema webster} can be reformulated in terms of the following question.

\begin{question}
 Are the norms $\kappa_{\infty}^o$ and $\Vert \cdot \Vert_{cb}$ equivalent on $\mathcal{K}_{\infty}^o$?
\end{question}

\end{document}